\newtheorem{theorem}{Theorem}[section]
\newtheorem{lemma}[theorem]{Lemma}
\newtheorem{example}[theorem]{Example}
\newtheorem{definition}[theorem]{Definition}
\newtheorem{remark}[theorem]{Remark}
\numberwithin{equation}{section}
\newenvironment{proof}[1][Proof]{\noindent\textbf{#1.} }{\hfill $\Box$}
 \makeatletter\setlength{\textwidth}{15.0cm}
\begin{document}
\author{Shuxia Pan\thanks{E-mail: shxpan@yeah.net.} \thanks{Supported by NSF of Gansu
Province of China (1208RJYA004) and the Development Program for Outstanding Young Teachers in Lanzhou University of Technology (1010ZCX019).} \\
{Department of Applied Mathematics, Lanzhou University of Technology,}\\
{Lanzhou, Gansu 730050, People's Republic of China}}
\title{\textbf{Propagation of Delayed Lattice Differential Equations without Local Quasimonotonicity}}\maketitle
\begin{abstract}
This paper is concerned with the traveling wave solutions and asymptotic spreading of delayed lattice differential equations without quasimonotonicity.
The spreading speed is obtained by constructing auxiliary equations and using the theory of lattice differential equations without time delay. The minimal wave speed of invasion traveling wave solutions is established by presenting the existence and nonexistence of traveling wave solutions.

\textbf{Keywords}: Auxiliary equation; asymptotic spreading;
traveling wave solutions.

\textbf{AMS Subject Classification (2010)}: 35C07; 34K31.
\end{abstract}

\section{Introduction}
\noindent

Lattice dynamical systems are very important to describe some evolutionary processes in life sciences \cite{bell,keener} and phase transitions
\cite{bates2}. For the scalar lattice differential equations, a typical
example is
\begin{equation}
\frac{du_n(t)}{dt}= [\mathcal{D}u]_n(x)
+f(u_n(t)),n\in \mathbb{Z}, t>0, \label{0}
\end{equation}
where $ f:\mathbb{R}\to \mathbb{R},$ and
\[
 [\mathcal{D}u]_n(x)=D\left( u_{n+1}(t)-2u_n(t)+u_{n-1}(t)\right)
\]
with $D>0.$
In the past decades, much attention has been paid to its propagation modes  indexed by traveling wave solutions and asymptotic spreading, see \cite{and,bates2,bell,cahn2,chenguo1,chenguo2,chow2,hsu,keener,mp1,mp2,mp3,shen,tonnelier,zinner1,zinner2}. In particular, to reflect the maturation time of the species under
consideration and the time needed for the signals  to travel along
axons and to cross synapses, time delay was introduced in lattice differential equations, and a delayed version of \eqref{0} is
\begin{equation}
\frac{du_n(t)}{dt}= [\mathcal{D}u]_n(x)
+f(u_n(t),u_n(t-\tau)),n\in \mathbb{Z}, t>0, \label{1}
\end{equation}
in which $\tau\ge 0$ is the time delay. Since Wu and Zou \cite{wuzou}, some results about the existence of traveling wave solutions and the estimation of asymptotic spreading of \eqref{1} have been established, we refer to  \cite{fang1,fang2,hlr,hlz,liang,llp,lvw,mazou2,mazou,thieme,wlr1,weng,zou}.

To better introduce the known results, we assume that
\begin{equation}
f(0,0)=f(K,K)=0, f(u,u)>0, u\in (0, K),
\label{2}
\end{equation}
with some $K>0,$ and $f(u,v)$ is continuous for $u,v\in [0, K].$ To apply comparison principle, the (local) monotonicity of $f(u,v)$ for $v>0$ is needed, for example, see the scalar models in \cite{fang1,fang2,liang,llp,lvw,mazou2,mazou,thieme,wlr1,weng,zou}. Moreover, if the time delay $\tau>0$ is small enough, some results on the existence of traveling wave solutions have been established by exponential order, see Huang et al. \cite{hlz}.

The purpose of this paper is to consider the propagation of \eqref{1} if $f(u,v)$ is not monotone increasing for $v$ near $0.$ For the sake of convenience, we consider the following special form of \eqref{1}
\begin{equation}
\frac{du_n(t)}{dt}= [\mathcal{D}u]_n(x)
+u_n(t) g(u_n(t),u_n(t-\tau)),n\in \mathbb{Z}, t>0, \label{3}
\end{equation}
in which $g:\mathbb{R}^2 \to \mathbb{R}$ satisfies the following assumptions:
\begin{description}
\item[(H1)] $g(1,0)=0, g(u,0)>0, u\in (0,1);$
\item[(H2)] $g(u,v)$ is Lipschitz continuous and strictly monotone decreasing for $u,v\in [0,1],$ and $g(u,0)\to -\infty, u\to \infty;$
\item[(H3)] $g(0,1)>0,$ and there exists $E\in (0,1)$ such that $g(E,E)=0;$
\item[(H4)] if $1>\overline{u}\ge \underline{u}>0$ such that
\[
g(\underline{u}, \overline{u})\le 0, g(\overline{u},\underline{u})\ge 0,
\]
then $\underline{u}=\overline{u}=E.$
\end{description}
By these assumptions, we see that \eqref{3} does not satisfy the (local) quasimonotonicity in  \cite{fang1,fang2,liang,llp,lvw,mazou2,mazou,thieme,wlr1,weng,zou}. And a typical example of $g$ is
\[
g(u,v)=1-u-av, a\in (0,1),
\]
which is a special form of Logistic nonlinearity with time delay.

In what follows, by using the spreading speed of undelayed scalar lattice differential equations and constructing auxiliary equations without time delay, we shall investigate the propagation of \eqref{3}. We first prove that the spreading speed of \eqref{3} is the same as that $g(u,v)=g(u,u)$ by the idea in Lin \cite{lin}, which implies the persistence of spreading speeds of delayed lattice differential equations even if the time delay $\tau$ is large and the equation cannot generate monotone semiflows. Furthermore, we establish the minimal wave speed of \eqref{3} by presenting the existence and nonexistence of traveling wave solutions for all positive wave speed, which is motivated by the results in Lin and Ruan \cite{linruan}.  These traveling wave solutions formulate the successful invasion of one new invader in population dynamics.

In this paper, we shall use the standard  ordering and interval in $\mathbb{R}.$ Let $C(\mathbb{R},\mathbb{R})$ be
\[
C(\mathbb{R},\mathbb{R})=\{u | u:\mathbb{R}\to \mathbb{R} \text{ is uniformly continuous and bounded}\}.
\]
Then $C$ is a Banach space equipped with the standard supremum norm. When $a<b$ is true, denote
\[
C_{[a,b]}=\{u| u\in C, a\le u \le b\}.
\]
If $u\in C^1\left( \mathbb{R},%
\mathbb{R}\right),$ then
\[
u\in C, u'\in C.
\]
For $\mu >0$, define
\[
B_\mu \left( \mathbb{R},\mathbb{R}\right) =\left\{ u(t):u(t)\in C\left( %
\mathbb{R},\mathbb{R}\right) \text{ and }\sup_{t\in
\mathbb{R}}\left| u(t)\right| e^{-\mu \left| t\right| }<\infty
\right\},
\]
then $B_\mu \left( \mathbb{R},\mathbb{R}\right) $
is a Banach space when it is equipped with the norm $\left| \cdot
\right| _\mu $ defined by
\[
\left| u\right| _\mu =\sup_{t\in \mathbb{R}}\left| u(t)\right|
e^{-\mu \left| t\right| }\text{ for }u\in B_\mu
\left(\mathbb{R},\mathbb{R}\right) .
\]
Let $l^{\infty}$ be
\[
l^{\infty}=\{u(n): n\in \mathbb{Z} \text{ and } u(n) \text{ is bounded for all }n\in \mathbb{Z}\}.
\]

We now give the following definition of  traveling wave solutions.
\begin{definition}\label{de1}
{\rm
A \emph{traveling wave solution} of \eqref{3} is a special
solution with form $u_n(t)=\phi (n+ct)$, in which  $c>0$
is the wave speed and $\phi \in C^1\left( \mathbb{R},%
\mathbb{R}\right) $ is the wave profile that propagates in $\mathbb{Z}$.}
\end{definition}
From Definition \ref{de1}, $\phi$ and $c$ must satisfy
\begin{equation}\label{2.1}
c\frac{d\phi (\xi)}{d\xi }=D(\phi (\xi+1)+\phi (\xi-1)-2\phi
(\xi))+\phi(\xi)g(\phi(\xi) ,\phi(\xi- c\tau)),\xi \in \mathbb{R}.
\end{equation}
To better reflect the evolutionary processes, we also require the following asymptotic boundary value condition
\begin{equation}\label{2.2}
\lim_{\xi\to -\infty}\phi (\xi)=0, \lim_{\xi\to \infty}\phi (\xi)=E.
\end{equation}

To index the asymptotic spreading, we also give the following definition.
\begin{definition}\label{de2}
{\rm
Assume that $u_n(t)$ is a nonnegative function for all $n\in\mathbb{N}, t>0$. $c_1>0$ is the spreading speed of $u_n(t)$ if
\begin{description}
\item[(1)] for any $c>c_1,$ $\lim_{t\to\infty}\sup_{|n|>ct}u_n(t)=0;$
\item[(2)] for any $c<c_1,$ $\liminf_{t\to\infty}\inf_{|n|<ct}u_n(t)>0.$
\end{description}}
\end{definition}

In literature, the spreading speed of delayed lattice differential equations has been investigated \cite{liang,thieme,weng}. In particular, if $f$  in \eqref{0} satisfies
\begin{description}
\item[(f1)] $f(u)=uh(u),h(M)=0$ for some $M >0;$
\item[(f2)] $h(u)$ is Lipschitz continuous and is decreasing for $u\in [0,M];$
\end{description}
then we can obtain the spreading speed of \eqref{0} by the theory in \cite{liang,thieme,weng}. More precisely, consider the following initial value problem
\begin{equation}\label{2.3}
\begin{cases}
\frac{du_n(t)}{dt}= [\mathcal{D}u]_n(x)
+f(u_n(t)),n\in \mathbb{Z}, t>0,  \\
u_n(0)=\psi(n), n\in \mathbb{Z}.
\end{cases}
\end{equation}
By Ma et al. \cite{mwz} and Weng et al. \cite{weng}, we have the following conclusions.
\begin{lemma}\label{le2.1}
Assume that (f1)-(f2) hold. If $0\le \psi(n)\le M, n\in \mathbb{Z},$ then \eqref{2.3} has a solution $u_n(t)$ for all $n\in \mathbb{Z},t>0.$
If ${w}_n(t), n\in \mathbb{Z},t>0,$ satisfies
\begin{equation*}
\begin{cases}
\frac{dw_n(t)}{dt}\ge (\le)[\mathcal{D}w]_n(t)
+f(w_n(t)),  \\
w_n(0)\ge (\le)\psi(n),
\end{cases}
\end{equation*}
or
\begin{eqnarray*}
w_{n}(t) &\geq &(\leq )e^{-(2D+d)(t-\theta)}w_n(\theta)  \nonumber \\
&&+%
\int_{\theta}^{t}e^{-(2D+d)(t-s)}[dw_{n}(s)+D(w_{n+1}(s)+w_{n-1}(s))+f(w_{n}(s))]ds
\label{2.5}
\end{eqnarray*}
with any fixed $d\ge 0,\theta\in [0,t),$ then ${w}_n(t) \ge (\le)u_n(t)$ for all $n\in \mathbb{Z},t>0.$ In particular, $w_n(x)$ is called {\rm an upper (a lower) solution} of \eqref{2.3}.
\end{lemma}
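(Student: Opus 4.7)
The plan is to view \eqref{2.3} as a semilinear ODE in the Banach space $l^{\infty}$ and to prove the comparison statement by converting it into an integral form whose right-hand side is monotone in the unknown. For existence, I extend $f$ to a globally Lipschitz function on $\mathbb{R}$ without changing its values on $[0,M]$; the coupling $[\mathcal{D}u]_n$ is a bounded linear operator on $l^{\infty}$ of norm at most $4D$, so Picard--Lindel\"of gives a unique local solution. Invariance of $[0,M]^{\mathbb{Z}}$ then yields global existence, because the constants $0$ and $M$ are stationary solutions by (f1) and the iteration described below preserves this range.

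The algebraic crux is that, since $h$ is Lipschitz on $[0,M]$, so is $f|_{[0,M]}$; let $L$ denote its Lipschitz constant. For any $d\ge L$ the map $v\mapsto dv+f(v)$ is nondecreasing on $[0,M]$. Adding $(2D+d)u_n$ to both sides of \eqref{2.3}, multiplying by the integrating factor $e^{(2D+d)t}$ and integrating from $\theta$ to $t$ rewrites the equation as exactly the integral identity displayed in the lemma. Carrying out the same manipulation on the differential inequality $\dot w_n\ge(\le)[\mathcal{D}w]_n+f(w_n)$ with $w_n(0)\ge(\le)\psi(n)$ produces the analogous integral inequality with the same $d$, so the two hypothesis forms are unified and it suffices to prove the comparison starting from the integral one.

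To carry out the comparison I introduce the operator
\[
T[w]_n(t)=e^{-(2D+d)t}\psi(n)+\int_0^t e^{-(2D+d)(t-s)}\bigl[dw_n(s)+D(w_{n+1}(s)+w_{n-1}(s))+f(w_n(s))\bigr]\,ds
\]
with $d\ge L$, acting on continuous functions from $[0,T]$ into $[0,M]^{\mathbb{Z}}$. Because each coefficient in front of a value of $w$ is nonnegative and $v\mapsto dv+f(v)$ is nondecreasing on $[0,M]$, the operator $T$ preserves the pointwise order. Its Picard iterates starting from the constant $0$ form a monotone nondecreasing sequence bounded above by $M$, those starting from $M$ a monotone nonincreasing sequence bounded below by $0$; both converge pointwise (and, by dominated convergence, in the integral sense) to the unique fixed point $u$, which is the solution of \eqref{2.3}. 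An upper solution $w$ satisfies $w\ge T[w]$, hence $w\ge T[w]\ge T^{2}[w]\ge\cdots$; passing to the limit yields $w\ge u$, and the lower-solution case is symmetric.

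The main obstacle is the non-compactness of the lattice $\mathbb{Z}$, which rules out a direct first-touching-point or strong-maximum-principle argument. The integrating-factor/iteration device above circumvents this difficulty, and the choice $d\ge L$ is precisely what makes the integral formulation in the lemma genuinely useful rather than a mere restatement of the ODE.
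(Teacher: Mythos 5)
Your proposal is correct, and since the paper gives no proof of Lemma \ref{le2.1} at all --- it is quoted from Ma et al. \cite{mwz} and Weng et al. \cite{weng} --- your mild-formulation plus monotone-iteration argument (integrating factor $e^{(2D+d)t}$, $d\ge L$ so that $v\mapsto dv+f(v)$ is nondecreasing on $[0,M]$, order-preserving operator $T$, Picard iterates from $0$ and $M$, and the chain $w\ge T[w]\ge T^2[w]\ge\cdots\to u$) is essentially the standard proof used in those references. Two small caveats worth recording: your comparison step implicitly needs the upper (lower) solution to take values in $[0,M]$, and it uses $d\ge L$ while the lemma allows ``any fixed $d\ge 0$'' in the integral form; the latter is harmless because the integral inequality, being assumed for every $\theta\in[0,t)$, with one value of $d$ implies the same inequality for every larger $d$ (a short integration-by-parts/Riemann-sum check), and in the paper's applications (Lemmas \ref{le3.3} and \ref{le3.5}) $d$ is in any case chosen large enough for the monotonicity you need.
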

For $\lambda >0, c>0,$ define
\[
\Delta (\lambda, c)=D(e^{\lambda}+e^{-\lambda}-2)-c\lambda +h(0).
\]
\begin{lemma}\label{le2.2}
There exists $c_2=:\inf_{\lambda >0}\frac{D(e^{\lambda}+e^{-\lambda}-2)+ h(0)}{\lambda}>0$ such that
\begin{description}
\item[(1)]if $c>c_2,$ then $\Delta (\lambda, c)=0$ has two distinct positive real roots $\lambda_1(c)< \lambda_2(c)$ satisfying
\begin{equation*}
\Delta (\lambda, c)=
\begin{cases}
<0, \lambda \in (\lambda_1(c), \lambda_2(c)),\\
>0, \lambda \in (0, \lambda_1(c)) \text{ or } \lambda >\lambda_2(c);
\end{cases}
\end{equation*}
\item[(2)] $\Delta (\lambda, c)=0$ has no real roots for $c<c_2$;
\item[(3)] let $\varepsilon >0$ be any positive constant, then $c_2$ is continuous and strictly increasing in $h(0)\ge \varepsilon.$
\end{description}
\end{lemma}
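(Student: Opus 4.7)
The plan is to reduce everything to a single-variable function. Since $\Delta(\lambda,c)=0$ is equivalent to $c=\Psi(\lambda):=\frac{D(e^{\lambda}+e^{-\lambda}-2)+h(0)}{\lambda}$, all three conclusions will follow from the shape of $\Psi$ on $(0,\infty)$. Because $h$ is decreasing on $[0,M]$ with $h(M)=0$ (assumptions (f1)--(f2)), we have $h(0)>0$; thus $\Psi(\lambda)\to+\infty$ both as $\lambda\to 0^+$ (numerator stays bounded below by $h(0)>0$ while the denominator vanishes) and as $\lambda\to+\infty$ (exponential growth dominates $\lambda$). Continuity of $\Psi$ on $(0,\infty)$ then forces a strictly positive infimum, attained at some $\lambda^{*}>0$; this number is precisely the $c_2$ in the statement.

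For (1) and (2), I would view $\Delta(\cdot,c)$ as a function of $\lambda$ alone. Its second $\lambda$-derivative is $D(e^{\lambda}+e^{-\lambda})>0$, so $\Delta(\cdot,c)$ is strictly convex; moreover $\Delta(0,c)=h(0)>0$ and $\partial_\lambda\Delta(0,c)=-c<0$, so on $(0,\infty)$ the curve decreases to a unique minimum at the $\lambda_c$ solving $D(e^{\lambda_c}-e^{-\lambda_c})=c$ and then increases to $+\infty$. By the envelope identity $\frac{d}{dc}\Delta(\lambda_c,c)=\partial_c\Delta(\lambda_c,c)=-\lambda_c<0$, this minimum value is strictly decreasing in $c$. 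Since $c=c_2$ is exactly the $c$-value at which the minimum equals zero (this is the content of $c_2=\inf_\lambda\Psi(\lambda)$), we conclude: for $c<c_2$ the minimum is positive and $\Delta(\cdot,c)$ has no real root on $(0,\infty)$; for $c>c_2$ the minimum is negative, so by strict convexity $\Delta(\cdot,c)$ has exactly two positive roots $\lambda_1(c)<\lambda_2(c)$, negative strictly between them and positive outside, as stated.

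For (3), the key observation is that, regarded as a function of $a:=h(0)$, $c_2(a)=\inf_{\lambda>0}\frac{D(e^{\lambda}+e^{-\lambda}-2)+a}{\lambda}$ is an infimum of functions each affine in $a$ with strictly positive slope $1/\lambda$. Strict monotonicity follows at once: if $\varepsilon\le a_1<a_2$ and $\lambda^{*}(a_2)$ attains the infimum for $a_2$ (existence by the same two-sided blow-up argument), then
\[
c_2(a_1)\le\frac{D(e^{\lambda^{*}(a_2)}+e^{-\lambda^{*}(a_2)}-2)+a_1}{\lambda^{*}(a_2)}<\frac{D(e^{\lambda^{*}(a_2)}+e^{-\lambda^{*}(a_2)}-2)+a_2}{\lambda^{*}(a_2)}=c_2(a_2).
\]
Continuity is immediate from the fact that $c_2$, as an infimum of affine functions of $a$, is concave on $(0,\infty)$ and therefore continuous on its interior.

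No step looks truly hard; the only subtle point is the attainment of the infimum in the definition of $c_2$, which is delivered cleanly by the blow-up of $\Psi$ at $0^{+}$ and at $+\infty$. Once that attainment is in hand, parts (1)--(3) reduce to strict-convexity bookkeeping and the elementary algebra of infima of affine families.
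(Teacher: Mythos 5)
Your proof is correct. The paper itself does not prove Lemma \ref{le2.2}; it simply quotes it from Ma et al.\ \cite{mwz} and Weng et al.\ \cite{weng}, and your argument --- attainment of the infimum of $\Psi(\lambda)=\frac{D(e^{\lambda}+e^{-\lambda}-2)+h(0)}{\lambda}$ via its blow-up at $0^{+}$ and $+\infty$, strict convexity of $\Delta(\cdot,c)$ with $\Delta(0,c)=h(0)>0$ and $\partial_\lambda\Delta(0,c)=-c<0$ for parts (1)--(2), and the infimum-of-affine-functions argument for part (3) --- is exactly the standard reasoning behind the cited results, so it fills the omitted proof appropriately. The only cosmetic gap is in part (2): the lemma asserts no real roots at all, while you only rule out roots in $(0,\infty)$; for $\lambda\le 0$ this is trivial since $e^{\lambda}+e^{-\lambda}-2\ge 0$, $-c\lambda\ge 0$ and $h(0)>0$ give $\Delta(\lambda,c)\ge h(0)>0$, so one added sentence closes it.
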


\begin{lemma}\label{le2.3}
Assume that (f1)-(f2) hold. If $0\le \psi(n)\le M, n\in \mathbb{Z}$ such that $\psi_n(0)\ne 0$ for some $n\in\mathbb{Z}$ and  $\psi(n)=0$ for all large $|n|,$ then $c_2$ is the spreading speed of $u_n(t)$ defined by \eqref{2.3}.
\end{lemma}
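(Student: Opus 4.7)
My plan is to verify the two conditions of Definition \ref{de2} separately, exploiting the KPP-type structure guaranteed by (f1)--(f2): because $h$ is decreasing, $f(u) = u h(u) \le h(0) u$ for $u \in [0, M]$, so the linearization at $0$ provides an \emph{exact} upper comparison for the nonlinear flow, while Lemma \ref{le2.2}(3) and the continuity it implies will let us recover the full linear spreading speed from a sequence of slightly subcritical sub-solutions.

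First I would address the upper estimate for $c > c_2$. Fix any $c' \in (c_2, c)$; by Lemma \ref{le2.2} the equation $\Delta(\lambda, c') = 0$ admits a positive root $\lambda_1(c')$, and a direct substitution shows that $V_n(t) = e^{-\lambda_1(c')(n - c' t)}$ and $W_n(t) = e^{\lambda_1(c')(n + c' t)}$ are exact solutions of the linearised lattice equation $V'_n = [\mathcal{D} V]_n + h(0) V_n$. Since $\psi$ has compact support and is bounded by $M$, there is a constant $A > 0$ with $A(V_n(0) + W_n(0)) \ge \psi(n)$ for every $n \in \mathbb{Z}$. Because $f(u) \le h(0) u$, the function $A(V_n(t) + W_n(t))$ is an upper solution of \eqref{2.3} in the sense of Lemma \ref{le2.1}, and so
\[
u_n(t) \le A\bigl(e^{-\lambda_1(c')(n - c' t)} + e^{\lambda_1(c')(n + c' t)}\bigr),\qquad n\in\mathbb{Z},\ t\ge 0.
\]
For $|n| > ct$ this gives $u_n(t) \le 2 A\,e^{-\lambda_1(c')(c-c')t} \to 0$, proving (1).

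Second, for $c < c_2$ I would establish $\liminf_{t\to\infty}\inf_{|n|<ct} u_n(t) > 0$. This is the delicate half. The scheme is Weinberger-type spreading. Step (a): using that $f'(0) = h(0) > 0$, a strong-positivity/comparison argument with the lattice heat semigroup (applied to the initial datum $\psi$, which is nontrivial) shows $u_n(t) > 0$ for every $n\in\mathbb{Z}$ and $t>0$, so after some time $T_0$ we have $u_n(T_0) \ge \delta$ on $|n|\le N_0$ for constants $\delta, N_0$ as large as desired. Step (b): for arbitrary $\eta\in(0,c_2-c)$ choose $\varepsilon>0$ so small that, by Lemma \ref{le2.2}(3), the reduced nonlinearity $f_\varepsilon(u) := (h(0)-\varepsilon) u - L u^2$ (with $L$ a Lipschitz majorant of $u\mapsto u(h(0)-h(u))/u^2$ on $[0,M]$) still has spreading speed exceeding $c_2-\eta$; since $f(u)\ge f_\varepsilon(u)$ for small $u$, a compactly supported wedge-type lower solution of the $f_\varepsilon$-equation, of the form built in Ma--Weng--Zou \cite{mwz} and Weng et al.\ \cite{weng}, yields a lower solution $\underline{u}_n(t)$ of \eqref{2.3} whose support expands at speed $c_2-\eta$. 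Step (c): by (a), after translating $\underline{u}$ in $n$ and $t$ we can arrange $\underline{u}_n(T_0)\le u_n(T_0)$; Lemma \ref{le2.1} then propagates this inequality and gives the desired lower bound on $|n|<ct$ for all large $t$. Letting $\eta\downarrow 0$ is unnecessary because $c<c_2$ is already a strict inequality.

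The main obstacle is step (b) of the lower-bound argument: on $\mathbb{Z}$ one cannot freely integrate by parts, so the standard continuous-space compactly-supported sub-solutions must be adapted to the lattice discrete Laplacian, and one has to carefully track how the truncation interacts with the nonlinear correction $-Lu^2$. This is exactly the technical core of the spreading results in \cite{mwz, weng}, which I would invoke rather than reprove; steps (a) and (c) are then routine comparison arguments via Lemma \ref{le2.1}, while the upper estimate in part (1) is handled by the explicit exponential supersolution above.
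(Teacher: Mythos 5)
The paper does not actually prove Lemma \ref{le2.3}: it is quoted directly from the literature (``By Ma et al.\ \cite{mwz} and Weng et al.\ \cite{weng}, we have the following conclusions''), so your outline --- linear exponential supersolutions for the upper estimate, a Weinberger/KPP-type subsolution argument for the lower estimate, with the hard lattice subsolution construction delegated to \cite{mwz,weng} --- is essentially a reconstruction of the standard proof behind that citation, and the overall strategy is sound. The reduction $f(u)=uh(u)\ge h(0)u-Lu^{2}\ge (h(0)-\varepsilon)u-Lu^{2}$ with $L$ the Lipschitz constant of $h$, combined with Lemma \ref{le2.2}(3) to make the subcritical speed approach $c_{2}$, is a legitimate way to organize the lower bound, and steps (a) and (c) are routine comparisons via Lemma \ref{le2.1}.

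There is, however, one step that fails as written: the deduction of part (1) from the displayed bound $u_n(t)\le A\bigl(e^{-\lambda_1(c')(n-c't)}+e^{\lambda_1(c')(n+c't)}\bigr)$. For $n>ct$ the second term equals $e^{\lambda_1(c')(n+c't)}\ge e^{\lambda_1(c')ct}$, which is exponentially \emph{large}, so this inequality does not give $u_n(t)\le 2Ae^{-\lambda_1(c')(c-c')t}$ on $|n|>ct$; the sum of the two moving exponentials is only useful on a bounded set of $n$. The repair is immediate and standard: since $\psi$ is bounded and vanishes for large $|n|$, each single exponential $Ae^{-\lambda_1(c')(n-c't)}$ and $Ae^{\lambda_1(c')(n+c't)}$ separately dominates $\psi$ at $t=0$ and is a supersolution, so two separate applications of the comparison principle give $u_n(t)\le A e^{-\lambda_1(c')(|n|-c't)}$, which does decay like $e^{-\lambda_1(c')(c-c')t}$ on $|n|>ct$. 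Relatedly, you should not feed $w_n=A(V_n+W_n)$ (which exceeds $M$) into the nonlinear comparison of Lemma \ref{le2.1}, since $f$ is only controlled on $[0,M]$; it is cleaner to note that $u_n(t)\in[0,M]$ and $f(u)\le h(0)u$ there, so $u$ is a lower solution of the linear equation $v_n'=[\mathcal{D}v]_n+h(0)v_n$ and the comparison is done at the linear level. With these adjustments your argument goes through and matches the cited sources in spirit.
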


\section{Asymptotic Spreading}
\noindent

In this section, we assume that (H1)-(H3) hold and consider the long time behavior of the following initial value problem
\begin{equation}\label{30}
\begin{cases}
\frac{du_n(t)}{dt}= [\mathcal{D}u]_n(x)
+u_n(t)g(u_n(t),u_n(t-\tau)),n\in \mathbb{Z}, t>0,  \\
u_n(s)=\varphi_n(s), n\in \mathbb{Z}, s\in [-\tau, 0],
\end{cases}
\end{equation}
in which $\varphi_n(s):[-\tau,0]\to l^{\infty}$ satisfies$0\le \varphi_n(s) \le 1$
and for each $n\in \mathbb{Z},$ it is uniform continuous in $s\in [-\tau, 0].$

In what follows, let $d>0$ be a constant such that
\[
du+ug(u,1), u\in [0,1]
\]
is monotone increasing. Consider
\[
\frac{du_n(t)}{dt}=-2D u_n(t)-du_n(t), u_n(0)=\omega (n), n\in\mathbb{Z},
\]
then we obtain an analytic and strictly positive semigroup in $l^{\infty}$ because of the boundedness of $2D+d$.
Then the standard semigroup theory implies the following results.
\begin{lemma}\label{le3.1}
Assume that (H1)-(H2) hold. \eqref{30} admits a unique mild solution $u_n(t)$ for all $t>0, n\in \mathbb{Z},$ which can be formulated by
\begin{equation}\label{2.6}
  u_n(t)=e^{-(2D+d)t}\varphi_n(0)+\int_0^t e^{-(2D+d)(t-s)}H_n(s)ds
\end{equation}
with
\[
H_n(s)=du_n(s)+D(u_{n+1}(s)+u_{n-1}(s))+u_n(s)g(u_n(s),u_n(s-\tau)).
\]
\end{lemma}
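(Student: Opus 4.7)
The plan is to recast \eqref{30} as an abstract ODE in $l^\infty$ and apply a fixed-point argument step-by-step on intervals of length $\tau$. Rewriting
\[
\frac{du_n(t)}{dt}+(2D+d)u_n(t)=H_n(t),
\]
where $H_n(t)=du_n(t)+D(u_{n+1}(t)+u_{n-1}(t))+u_n(t)g(u_n(t),u_n(t-\tau))$, the bounded linear operator $-(2D+d)I$ generates the uniformly continuous semigroup $T(t)=e^{-(2D+d)t}I$ on $l^\infty$, and the variation-of-constants formula gives exactly \eqref{2.6}. Because the generator is bounded, any mild solution is automatically a strong solution of \eqref{30}, so it suffices to produce a unique mild solution in $C([0,T];l^\infty)$ for every $T>0$.

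First I would handle the interval $[0,\tau]$, on which $u_n(s-\tau)=\varphi_n(s-\tau)$ is a known function in $[0,1]$. Define a map $F$ on the closed set
\[
\mathcal{S}_{T_0}=\{u\in C([0,T_0];l^\infty):0\le u_n(t)\le 1\}
\]
by the right-hand side of \eqref{2.6}. The key invariance estimate uses the choice of $d$: by (H2) the map $u\mapsto du+ug(u,v)$ is monotone increasing on $[0,1]$ uniformly in $v\in[0,1]$ once $d$ dominates the relevant Lipschitz constant, and by (H1)-(H2) one has $d\cdot 1+1\cdot g(1,v)\le d+g(1,0)=d$ for every $v\in[0,1]$. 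Consequently, whenever $u\in\mathcal{S}_{T_0}$,
\[
0\le H_n(s)\le 2D+d,
\]
and then
\[
0\le (Fu)_n(t)\le e^{-(2D+d)t}+(2D+d)\int_0^t e^{-(2D+d)(t-s)}ds=1,
\]
so $F$ maps $\mathcal{S}_{T_0}$ into itself.

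Next, using the global Lipschitz continuity of $g$ on $[0,1]^2$ from (H2), the nonlinear term in $H_n$ is Lipschitz in $u\in l^\infty_{[0,1]}$ with some constant $K$ depending only on $d$, $D$ and $g$. A direct estimate gives
\[
\|Fu-Fw\|_{C([0,T_0];l^\infty)}\le K\,T_0\,\|u-w\|_{C([0,T_0];l^\infty)},
\]
so choosing $T_0<1/K$ makes $F$ a contraction and the Banach fixed-point theorem yields a unique mild solution on $[0,T_0]$. Since $T_0$ depends only on structural constants and the invariance bound is preserved, the solution can be continued by the same argument on $[T_0,2T_0]$, etc., to cover $[0,\tau]$. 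Finally, with $u_n$ now defined on $[0,\tau]$ and valued in $[0,1]$, the same procedure on $[\tau,2\tau]$ produces the unique extension, and iterating in blocks of length $\tau$ gives the unique global mild solution satisfying \eqref{2.6}.

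The only delicate step is the invariance $(Fu)_n(t)\in[0,1]$, since the equation is not quasimonotone in the delayed argument and Lemma \ref{le2.1} cannot be invoked directly; everything else reduces to the standard Picard-type iteration for semilinear equations with a bounded generator. This obstruction is resolved precisely by the choice of $d$ together with the monotonicity of $g$ in its second variable from (H2), which together bound $H_n$ uniformly by $2D+d$ on $[0,1]$-valued inputs.
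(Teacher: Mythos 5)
Your argument is correct and is essentially the paper's route: the paper merely notes that $-(2D+d)I$ generates a bounded (analytic, positive) semigroup on $l^{\infty}$ and invokes the variation-of-constants formula, omitting all details, while you supply the standard method-of-steps plus Picard contraction that makes this precise (and, via the invariance of $[0,1]$ coming from the choice of $d$, you obtain the bound of Lemma \ref{le3.2} along the way). The only small caveat is that your contraction on $\mathcal{S}_{T_0}$ yields uniqueness within the class of $[0,1]$-valued solutions; uniqueness among all bounded mild solutions follows from the same Gronwall/contraction estimate using only the local Lipschitz continuity of $u\mapsto u\,g(u,v)$ on bounded sets.
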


This lemma is also clear by the formula of constant variation, we omit the proof here. It should be noted that Lemma \ref{le3.1} remains true even if $\tau =0.$ By (H1)-(H2), $u_n(t)$ also satisfies the following conclusion.
\begin{lemma}\label{le3.2}
Assume that (H1)-(H2) hold. If $u_n(t)$ is defined by \eqref{30}, then
\[
0\le u_n(t)\le 1, t>0, n\in \mathbb{Z}.
\]
\end{lemma}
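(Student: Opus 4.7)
The plan is to establish the two-sided bound $0\le u_n(t)\le 1$ by iterating on the successive intervals $[k\tau,(k+1)\tau]$, $k=0,1,2,\ldots$, on each of which \eqref{30} reduces to a non-delayed lattice ODE because $u_n(\cdot-\tau)$ is already known to take values in $[0,1]$.

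On $[0,\tau]$ we substitute $u_n(t-\tau)=\varphi_n(t-\tau)\in[0,1]$, so the equation becomes the time-dependent but undelayed lattice problem
\[
\frac{du_n(t)}{dt}=[\mathcal{D}u]_n(t)+u_n(t)\,g\bigl(u_n(t),\varphi_n(t-\tau)\bigr),\qquad t\in[0,\tau],
\]
to which the mild formula \eqref{2.6} still applies. The next step is to verify that $\underline{w}_n\equiv 0$ and $\overline{w}_n\equiv 1$ are a lower and an upper mild solution respectively. The lower inequality is trivial. For the upper one, (H1)--(H2) force $g(1,v)\le g(1,0)=0$ for all $v\in[0,1]$, whence the right-hand side of \eqref{2.6} evaluated at $w_n\equiv 1$ equals
\[
e^{-(2D+d)t}+\int_0^t e^{-(2D+d)(t-s)}\bigl(2D+d+g(1,\varphi_n(s-\tau))\bigr)\,ds\le 1.
\]

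I would then apply an order-preserving Picard iteration in the mild formulation: starting from $u^{(0)}_n\equiv 0$ and iterating via \eqref{2.6}, the constant $d$ has been selected so that $u\mapsto du+ug(u,v)$ is nondecreasing on $[0,1]$ uniformly in $v\in[0,1]$; this makes the iterates a monotone nondecreasing sequence in the pointwise order on $l^{\infty}$, bounded above by $\overline{w}_n\equiv 1$. The Lipschitz continuity of $g$ from (H2) ensures uniform convergence of the iteration, and the limit coincides with the unique mild solution furnished by Lemma \ref{le3.1}; by the sandwich, $0\le u_n(t)\le 1$ on $[0,\tau]$.

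Finally, once the bound is established on $[0,\tau]$, the identical argument applies on $[\tau,2\tau]$ because $u_n(t-\tau)\in[0,1]$ is now known on that interval; induction on $k$ extends the bound to every $[k\tau,(k+1)\tau]$ and hence to all $t>0$. The only mildly delicate point is ensuring that $d$ can be chosen so $du+ug(u,v)$ is monotone in $u$ uniformly in $v\in[0,1]$ rather than merely at $v=1$; this follows at once from the global Lipschitz continuity asserted in (H2) by enlarging $d$ if necessary, so no real obstacle arises. The remaining steps are direct computation and the iteration argument already used in Lemma \ref{le2.1}.
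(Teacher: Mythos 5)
Your argument is correct, but it follows a different route from the paper. The paper disposes of the lemma in one line: nonnegativity follows from the quasipositivity of the reaction term $u\,g(u,v)$ (it vanishes at $u=0$ and the off-diagonal coupling $D(u_{n+1}+u_{n-1})$ is nonnegative), while the upper bound follows from Lemma \ref{le2.1}: since $g$ is decreasing in $v$ and $u_n\ge 0$, one has $u_n g(u_n,u_n(\cdot-\tau))\le u_n g(u_n,0)$, so $u_n(t)$ is a lower solution of the undelayed problem \eqref{32}, whose nonlinearity satisfies (f1)--(f2) with $M=1$, and the comparison principle gives $u_n(t)\le 1$. You instead use the method of steps on $[k\tau,(k+1)\tau]$, freeze the delayed argument as a known $[0,1]$-valued coefficient, and run a monotone Picard iteration in the mild formulation \eqref{2.6} after enlarging $d$ so that $u\mapsto du+u g(u,v)$ is nondecreasing on $[0,1]$ uniformly in $v$; the iterates are sandwiched between $0$ and $1$ and, by the uniqueness in Lemma \ref{le3.1}, their limit is $u_n(t)$. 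This is more self-contained (it does not invoke the comparison Lemma \ref{le2.1} at all, effectively reproving the needed comparison via monotone iteration), at the cost of being considerably longer; the paper's argument is shorter but leans on the quoted comparison theory for undelayed lattice equations. One small point of looseness on your side: Lipschitz continuity of $g$ by itself does not give "uniform convergence" of the monotone iterates — they converge by monotonicity and boundedness (locally uniformly by Dini once continuity of the limit is checked), and it is the uniqueness statement of Lemma \ref{le3.1} that identifies the limit with $u_n(t)$; this is easily repaired and does not affect the conclusion.
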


The positivity of $u_n(t)$ is clear by the quasipositivity of $ug(u,v),$ and $u_n(t)\le 1$ is clear by (H2) and Lemma \ref{le2.1}. Furthermore, using Lemmas \ref{le3.1}-\ref{le3.2}, we have the following conclusion.
\begin{lemma}\label{le3.3}
Assume that $u_n(t)$ is defined by \eqref{30} and (H1)-(H2) hold.
\begin{description}
  \item[(1)] For $t>\theta \ge 0, n\in \mathbb{Z},$ we have
  \[
u_n(t)\le e^{-(2D+d)(t-\theta)}w_n(\theta)+\int_\theta^t e^{-(2D+d)(t-s)}\overline{H}_n(s)ds
\]
with
\[
\overline{H}_n(s)=du_n(s)+ D(u_{n+1}(s)+u_{n-1}(s))+u_n(s)g(u_n(s),0).
\]
\item[(2)]
For $t>\theta \ge 0, n\in \mathbb{Z},$ we also have
  \[
u_n(t)\ge e^{-(2D+d)(t-\theta)}u_n(\theta)+\int_\theta^t e^{-(2D+d)(t-s)}\underline{H}_n(s)ds
\]
with
\[
\underline{H}_n(s)=du_n(s)+D(u_{n+1}(s)+u_{n-1}(s))+u_n(s)g(u_n(s),1).
\]
\end{description}
\end{lemma}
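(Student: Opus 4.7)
The plan is to derive both inequalities from the variation-of-constants representation \eqref{2.6} by shifting the base point from $0$ to $\theta$ and then exploiting the monotonicity of $g$ in its second argument provided by (H2).

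First I would establish the translated mild-solution identity
\[
u_n(t)=e^{-(2D+d)(t-\theta)}u_n(\theta)+\int_\theta^t e^{-(2D+d)(t-s)}H_n(s)\,ds,\qquad t>\theta\ge 0,
\]
with $H_n(s)=du_n(s)+D(u_{n+1}(s)+u_{n-1}(s))+u_n(s)g(u_n(s),u_n(s-\tau))$. This is a direct consequence of the semigroup property of $e^{-(2D+d)t}$ applied to \eqref{2.6}, or, equivalently, one checks that both sides solve the same linear Cauchy problem on $[\theta,\infty)$ with datum $u_n(\theta)$ and forcing $H_n(\cdot)+(2D+d)u_n(\cdot)-\text{(linear part)}$; uniqueness from Lemma~\ref{le3.1} then identifies them.

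Next I would compare $H_n(s)$ with $\overline{H}_n(s)$ and $\underline{H}_n(s)$. By Lemma~\ref{le3.2} we have $0\le u_n(s-\tau)\le 1$ for $s\ge 0$ (after $s\ge\tau$ this uses the solution bound; for $s\in[0,\tau]$ it is the hypothesis on $\varphi_n$). Assumption (H2) says $g(u,v)$ is strictly decreasing in $v$ on $[0,1]$, so
\[
g(u_n(s),0)\;\ge\;g(u_n(s),u_n(s-\tau))\;\ge\;g(u_n(s),1).
\]
Multiplying through by the nonnegative factor $u_n(s)$ and adding the common term $du_n(s)+D(u_{n+1}(s)+u_{n-1}(s))$ yields
\[
\overline{H}_n(s)\;\ge\;H_n(s)\;\ge\;\underline{H}_n(s),\qquad s\ge 0.
\]

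Finally, inserting these two pointwise bounds into the translated mild-solution identity, and using the positivity of the kernel $e^{-(2D+d)(t-s)}$, gives both assertions (1) and (2) at once.

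The only delicate point I anticipate is bookkeeping the compatibility for small $s\in[0,\tau]$, where $u_n(s-\tau)$ is given by the initial datum $\varphi_n(s-\tau)\in[0,1]$ rather than by the solution itself; once this is noted, the monotonicity comparison in (H2) applies uniformly on $[0,t]$ and the argument closes. No fixed-point or approximation step is needed, so the proof reduces to the shift of base point plus the one-line monotonicity comparison.
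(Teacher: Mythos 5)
Your argument is correct and is exactly the route the paper intends (it states Lemma 3.3 with only the remark "using Lemmas 3.1--3.2"): shift the base point in the mild-solution formula \eqref{2.6} via the semigroup property, then use $0\le u_n(s-\tau)\le 1$ from Lemma \ref{le3.2} together with the monotonicity of $g$ in its second argument from (H2) and the nonnegativity of $u_n(s)$ and of the kernel. Your handling of $s\in[0,\tau]$ via the initial datum is the right bookkeeping, and note that the $w_n(\theta)$ in part (1) of the statement is simply a typo for $u_n(\theta)$, as you implicitly assumed.
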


Since Lemma \ref{le3.1} also holds for $\tau =0,$ then $u_n(t)$ is an upper solution of
\begin{equation*}\label{31}
\begin{cases}
\frac{du_n(t)}{dt}= [\mathcal{D}u]_n(x)
+u_n(t)g(u_n(t),1),n\in \mathbb{Z}, t>0,  \\
u_n(0)=\varphi_n(0), n\in \mathbb{Z},
\end{cases}
\end{equation*}
and a lower solution of
\begin{equation}\label{32}
\begin{cases}
\frac{du_n(t)}{dt}= [\mathcal{D}u]_n(x)
+u_n(t)g(u_n(t),0),n\in \mathbb{Z}, t>0,  \\
u_n(0)=\varphi_n(0), n\in \mathbb{Z}.
\end{cases}
\end{equation}

By Lemmas \ref{le2.1} and \ref{le2.3}, we have the following conclusion.
\begin{lemma}\label{le3.4}
Assume that (H1)-(H3) hold. For any given $\epsilon >0.$ If $\varphi_0(0)\ge \epsilon$ and $0\le \varphi_n(0) \le 1,n\in\mathbb{Z},$ and $u_n(t)$ is defined by
\begin{equation*}\label{33}
\begin{cases}
\frac{du_n(t)}{dt}= [\mathcal{D}u]_n(x)
+u_n(t)g(u_n(t),1),n\in \mathbb{Z}, t>0,  \\
u_n(0)=\varphi_n(0), n\in \mathbb{Z}.
\end{cases}
\end{equation*}
Then there exists $\delta =\delta (\epsilon)>0$ such that
\[
u_0(t+\tau)> \delta, t>0,
\]
in which $\delta >0$ is independent of $\varphi_n(0), n\neq 0$.
\end{lemma}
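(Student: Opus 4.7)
The plan is to reduce to a minimal initial datum by the comparison principle and then combine a short-time Duhamel lower bound with a long-time lower bound coming from the spreading speed of the underlying undelayed scalar equation. First I would check that the nonlinearity $f(u)=uh(u)$ with $h(u):=g(u,1)$ fits the framework of Lemmas \ref{le2.1} and \ref{le2.3}: $h$ is Lipschitz and strictly decreasing by (H2), with $h(0)=g(0,1)>0$ by (H3) and $h(u)\le g(u,0)\to -\infty$ as $u\to\infty$, so there is a unique $M\in(0,E)$ with $h(M)=0$, and (f1)-(f2) are satisfied. By Lemma \ref{le2.1} the solution is monotone in the initial datum, so letting $v_n(t)$ denote the solution of the same equation with $\tilde\varphi_0(0)=\epsilon$ and $\tilde\varphi_n(0)=0$ for $n\neq 0$, I have $v_n(t)\le u_n(t)$, and it suffices to bound $v_0(t+\tau)$ below by a constant depending only on $\epsilon$.

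For the short-time piece, using the mild formulation of Lemma \ref{le3.1} (which is valid also with $\tau=0$) and the choice of $d$ that makes $du+uh(u)$ nondecreasing on $[0,1]$, every term in the Duhamel integrand is nonnegative by Lemma \ref{le3.2}, so
\[
v_0(s)\ge e^{-(2D+d)s}\,\tilde\varphi_0(0)=\epsilon\,e^{-(2D+d)s},\qquad s\ge 0.
\]
This already supplies a strictly positive lower bound for $v_0$ on every bounded time interval.

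For the long-time piece, the initial datum $\tilde\varphi_n(0)=\epsilon$ if $n=0$ and $0$ otherwise is nontrivial and compactly supported, so Lemma \ref{le2.3} gives spreading speed $c_2>0$ for $v_n(t)$. Fixing any $c\in(0,c_2)$, the definition of spreading speed yields $\liminf_{s\to\infty}\inf_{|n|<cs}v_n(s)>0$; since $n=0$ satisfies $|n|<cs$ for every $s>0$, this produces $T>0$ and $\delta_1>0$ depending on $\epsilon$ alone such that $v_0(s)>\delta_1$ for $s\ge T$. Setting $\delta:=\min\{\epsilon\,e^{-(2D+d)(T+\tau)},\,\delta_1\}>0$ I then obtain $v_0(t+\tau)>\delta$ for all $t>0$, and by comparison $u_0(t+\tau)\ge v_0(t+\tau)>\delta$.

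The step I expect to be the most delicate is ensuring that the constants $T$ and $\delta_1$ coming out of the spreading speed depend only on $\epsilon$, not on the $\varphi_n(0)$ with $n\neq 0$. This is automatic once one has reduced to the extremal initial datum $\tilde\varphi_n(0)=\epsilon\,\mathbf{1}_{\{n=0\}}$, which itself depends only on $\epsilon$; after that, joining the two regimes at the threshold $s=T$ is routine bookkeeping.
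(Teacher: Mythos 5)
Your argument is correct and is essentially the route the paper intends: the paper gives no written proof of this lemma beyond the phrase ``By Lemmas \ref{le2.1} and \ref{le2.3}'', and your reduction by comparison to the extremal compactly supported datum, combined with the Duhamel positivity bound on bounded time intervals and the spreading-speed lower bound of Lemma \ref{le2.3} for large times, is exactly that reasoning made explicit, including the key point that $\delta$ is independent of $\varphi_n(0)$ for $n\neq 0$. The only nit is that Lemmas \ref{le2.1} and \ref{le2.3} are stated for data in $[0,M]$ with $M$ the zero of $h(u)=g(u,1)$, so when $\epsilon>M$ you should take the extremal datum to be $\min\{\epsilon,M\}$ at $n=0$; this changes nothing in the conclusion.
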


Using Lemma \ref{le3.4}, we can obtain an auxiliary equation without time delay, which is formulated by the following lemma.
\begin{lemma}\label{le3.5}
Assume that $u_n(t)$ is defined by \eqref{30} and (H1)-(H3) hold. Then for each $\epsilon \in (0,1),$ there exists $M=M(\epsilon)\ge 1$ such that
\[
u_n(t)\ge e^{-(2D+d)(t-\theta)}u_n(\theta)+\int_\theta^t e^{-(2D+d)(t-s)}\underline{\underline{{H}}}_n(s)ds
\]
with $\theta \in [0,t)$ and
\[
\underline{\underline{{H}}}_n(s)=du_n(s)+D(u_{n+1}(s)+u_{n-1}(s))+u_n(s)g(Mu_n(s),\epsilon).
\]
\end{lemma}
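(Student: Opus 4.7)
The plan is to verify, for a suitably chosen $M=M(\epsilon)\ge 1$, the pointwise inequality
\[
u_n(s)\bigl[g(u_n(s),u_n(s-\tau))-g(Mu_n(s),\epsilon)\bigr]\ge 0,\quad s\in[\theta,t].
\]
Once this is in hand, the integral inequality in the statement follows immediately from the exact constant-variation representation $u_n(t)=e^{-(2D+d)(t-\theta)}u_n(\theta)+\int_\theta^t e^{-(2D+d)(t-s)}H_n(s)\,ds$ coming from Lemma \ref{le3.1} (used with the semigroup property from the base point $\theta$), since the linear and diffusive parts of $H_n$ and $\underline{\underline{H}}_n$ agree.

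If $u_n(s)=0$ the inequality is trivial, so fix $s$ with $u_n(s)>0$ and split into two cases. If $u_n(s-\tau)\le\epsilon$, the strict monotonicity of $g$ in each argument (H2), combined with $M\ge 1$, yields
\[
g(u_n(s),u_n(s-\tau))\ge g(u_n(s),\epsilon)\ge g(Mu_n(s),\epsilon),
\]
and we are done. The substantive case is $u_n(s-\tau)>\epsilon$, where I would first establish a positive lower bound $u_n(s)\ge\delta(\epsilon)>0$. This is where Lemma \ref{le3.4} enters: letting $v_n$ solve the undelayed auxiliary equation $\dot v_n=[\mathcal{D}v]_n+v_ng(v_n,1)$ on $[s-\tau,s]$ with $v_n(s-\tau)=u_n(s-\tau)$, Lemma \ref{le3.3}(2) gives $u_n(s)\ge v_n(s)$, while Lemma \ref{le3.4} (applied at site $n$ by translation) gives $v_n(s)\ge\delta(\epsilon)$ since $v_n(s-\tau)\ge\epsilon$; hence $u_n(s)\ge\delta(\epsilon)$.

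With this lower bound in place, I would use $u_n(s),u_n(s-\tau)\in[0,1]$ and monotonicity of $g$ to obtain $g(u_n(s),u_n(s-\tau))\ge g(1,1)$, noting that $g(1,1)<g(E,E)=0$ by (H3) and strict monotonicity. By (H2), $g(u,\epsilon)\to-\infty$ as $u\to\infty$, so I may choose $M=M(\epsilon)$ large enough that $g(M\delta(\epsilon),\epsilon)\le g(1,1)$; monotonicity in the first argument then delivers
\[
g(Mu_n(s),\epsilon)\le g(M\delta(\epsilon),\epsilon)\le g(1,1)\le g(u_n(s),u_n(s-\tau)),
\]
which closes the second case. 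The main technical obstacle is precisely the lower bound $u_n(s)\ge\delta(\epsilon)$: the auxiliary-comparison argument goes through cleanly when $s-\tau\ge 0$, but for $s\in[\theta,\tau)$ (which can occur when $\theta<\tau$) it must be supplemented by a direct Gronwall-type estimate on the delayed equation itself, based on the crude differential inequality $\dot u_n(t)\ge u_n(t)[g(1,1)-2D]$ (obtained by dropping the nonnegative diffusion terms and using $u_n,u_n(\cdot-\tau)\le 1$), so as to propagate the positivity carried by the initial history $\varphi_n$ across the initial strip.
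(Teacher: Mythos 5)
Your core argument is the paper's: the same split according to whether $u_n(s-\tau)$ is below or above $\epsilon$, the same use of Lemma \ref{le3.3}(2) to compare with the undelayed equation with nonlinearity $u g(u,1)$, and the same appeal to Lemma \ref{le3.4} to extract a lower bound $u_n(s)\ge\delta(\epsilon)$ in the second case. In fact your way of fixing $M$ --- forcing $g(M\delta(\epsilon),\epsilon)\le g(1,1)\le g(u_n(s),u_n(s-\tau))$ by means of $g(u,\epsilon)\le g(u,0)\to-\infty$ --- is more careful than the paper's one-line claim that some $M$ with $Mu_n(t)\ge u_n(t-\tau)$ does the job (that claim does not follow from monotonicity alone; your chain of inequalities is the correct way to complete the step).

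The genuine problem is the initial strip $s\in[\theta,\tau)$, which you correctly flagged but whose proposed repair does not work. The Gronwall inequality $\dot u_n\ge u_n[g(1,1)-2D]$ only propagates the value $u_n(0)=\varphi_n(0)$ forward, and $\varphi_n(0)$ is completely decoupled from the history values $\varphi_n(s)$, $s\in[-\tau,0)$: the history is prescribed data, not a solution of the equation, so positivity of $\varphi_n(s-\tau)$ gives no lower bound on $u_n(s)$ in terms of $\epsilon$ alone. Concretely, take $\varphi_m\equiv0$ for $m\ne n$, $\varphi_n\equiv1$ on $[-\tau,-\tau/2]$, and $\varphi_n(0)=\sigma$ small; then for $s\in(0,\tau/2]$ one has $u_n(s-\tau)=1>\epsilon$ while $u_n(s)$ is positive but as small as desired (uniformly on this interval, by continuous dependence), so that $g(u_n(s),u_n(s-\tau))\to g(0,1)$ whereas $g(Mu_n(s),\epsilon)\to g(0,\epsilon)>g(0,1)$ for every fixed $M$, by the strict monotonicity in (H2). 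Hence $H_n(s)<\underline{\underline{H}}_n(s)$ near $s=0$, and the asserted integral inequality fails for $\theta=0$ and $t$ small: no choice of $M(\epsilon)$ can rescue the strip. The bound $u_n(s)\ge\delta(\epsilon)$ is available only once $s-\tau\ge0$, i.e. the lemma should be invoked with $\theta\ge\tau$ (which suffices for its use in Theorem \ref{th1}); the paper's own terse proof passes over this restriction silently. Replacing your Gronwall patch by this restriction (or by a compatibility hypothesis tying $\varphi_n(0)$ to the history) closes your argument.
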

\begin{proof}
If $u_n(t-\tau) < \epsilon,$ then
\[
u_n(t)g(u_n(t),u_n(t-\tau)) \ge u_n(t)g(u_n(t),\epsilon)
\]
from (H2). If $u_n(t-\tau) > \epsilon,$ then (H2)-(H3) and  Lemma \ref{le3.4} imply that there exists $M>1$ such that
\[
M u_n(t)\ge u_n(t-\tau)
\]
and
\[
u_n(t)g(u_n(t),u_n(t-\tau)) \ge u_n(t)g(M u_n(t),\epsilon).
\]
The proof is complete.
\end{proof}

We now present the main result of this section.
\begin{theorem}\label{th1}
Assume that  (H1)-(H3) hold and
\[
c^*= : \inf_{\lambda >0}\frac{D(e^{\lambda}+e^{-\lambda}-2)+ g(0,0)}{\lambda}.
\]
If $\varphi_n(s)$ satisfies
$
\varphi_n(s)=0, |n|>M, s\in [-\tau, 0]
$
with some $M >0$ and
$
\varphi_n(0)>0
$
for some $n\in \mathbb{Z},$ then $c^*$ is the spreading speed of $u_n(t)$ defined by \eqref{30}.
\end{theorem}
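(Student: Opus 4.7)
The plan is to trap the delayed solution $u_n(t)$ between the solutions of two undelayed lattice equations whose spreading speeds are furnished by Lemma \ref{le2.3}, and then let an auxiliary parameter $\epsilon$ tend to $0$.

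For the upper half of Definition \ref{de2}, Lemma \ref{le3.3}(1) shows that $u_n(t)$ is, in the integral sense of Lemma \ref{le2.1}, a lower solution of the undelayed problem
\begin{equation*}
\frac{d\overline v_n(t)}{dt} = [\mathcal{D}\overline v]_n(t) + \overline v_n(t)\,g(\overline v_n(t),0),\qquad \overline v_n(0)=\varphi_n(0).
\end{equation*}
The nonlinearity $f(v)=v\,g(v,0)$ fulfils (f1)--(f2) with $M=1$ by (H1), and its linearization at $v=0$ has coefficient $g(0,0)>0$ by (H1)--(H2). Lemma \ref{le2.1} then gives $u_n(t)\le\overline v_n(t)$, and Lemma \ref{le2.3} identifies the spreading speed of $\overline v_n(t)$ as precisely $c^*$. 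Consequently $\lim_{t\to\infty}\sup_{|n|>ct} u_n(t)\le\lim_{t\to\infty}\sup_{|n|>ct}\overline v_n(t)=0$ for every $c>c^*$.

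For the lower half, fix a small $\epsilon\in(0,E)$. Lemma \ref{le3.5} produces $M=M(\epsilon)\ge 1$ such that $u_n(t)$ is an upper solution (in the integral sense of Lemma \ref{le2.1}) of
\begin{equation*}
\frac{d\underline v_n(t)}{dt} = [\mathcal{D}\underline v]_n(t) + \underline v_n(t)\,g(M\underline v_n(t),\epsilon).
\end{equation*}
The associated $h_\epsilon(v)=g(Mv,\epsilon)$ is Lipschitz and strictly decreasing by (H2), satisfies $h_\epsilon(0)=g(0,\epsilon)>0$ (since $g(0,1)>0$ by (H3) and $g$ is decreasing in its second argument), and has a unique positive zero because $g(u,\epsilon)\to-\infty$ as $u\to\infty$ by (H2). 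Hence $f_\epsilon(v)=v\,h_\epsilon(v)$ meets (f1)--(f2). Choose a small $\theta>0$; the strict positivity of $u_n(\theta)$ (inherited from \eqref{2.6} together with the hypothesis $\varphi_n(0)>0$ for some $n$) lets us truncate to a compactly supported, nontrivial datum $\psi(n)\le u_n(\theta)$ for $\underline v_n$. Lemma \ref{le2.1} then gives $u_n(t)\ge\underline v_n(t)$ for $t\ge\theta$, and Lemma \ref{le2.3} furnishes the spreading speed
\[
c^*(\epsilon):=\inf_{\lambda>0}\frac{D(e^\lambda+e^{-\lambda}-2)+g(0,\epsilon)}{\lambda}
\]
for $\underline v_n$. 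Lipschitz continuity of $g$ gives $g(0,\epsilon)\to g(0,0)$, and Lemma \ref{le2.2}(3) then yields $c^*(\epsilon)\to c^*$ as $\epsilon\to 0^+$. For any $c<c^*$, choose $\epsilon$ with $c<c^*(\epsilon)$; then $\liminf_{t\to\infty}\inf_{|n|<ct}u_n(t)\ge\liminf_{t\to\infty}\inf_{|n|<ct}\underline v_n(t)>0$.

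The main obstacle lies in the lower half: because $g(u,v)$ is not quasimonotone in $v$, one cannot compare through $u_n(t-\tau)$ directly, so the replacement is the $\epsilon$--dependent equation of Lemma \ref{le3.5}, whose linearization at zero depends on $\epsilon$ rather than on $g(0,0)$. The continuity statement in Lemma \ref{le2.2}(3) is exactly what salvages the argument: it forces $c^*(\epsilon)\to c^*$ as $\epsilon\to 0^+$ and thereby closes the gap between the upper and lower spreading rates.
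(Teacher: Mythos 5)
Your proposal is correct and follows essentially the same route as the paper: the upper estimate comes from comparing with the undelayed equation \eqref{32} via Lemma \ref{le2.3}, and the lower estimate from the $\epsilon$-auxiliary undelayed equation of Lemma \ref{le3.5} together with the continuity of the linear speed in $g(0,\epsilon)$ (Lemma \ref{le2.2}(3)), which is exactly how the paper chooses $\epsilon$ for a given $c'<c^*$. You merely spell out a few details the paper leaves implicit (verification of (f1)--(f2) for the auxiliary nonlinearity and the truncation of the initial datum), so no substantive difference.
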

\begin{proof}
If $c>c^*,$ then $u_n(t)$ is a lower solution of \eqref{32} and
\[
\lim_{t\to \infty}\sup_{|n|> ct}u_n(t)=0
\]
by Lemma \ref{le2.3}. If $c'<c^*,$ then there exists $\epsilon >0$ such that
\[
D(e^{\lambda}+e^{-\lambda}-2)-c\lambda +g(0, \epsilon)>0
\]
for any $2c\le c'+c^*$ and $\lambda >0$ by Lemma \ref{le2.2}. Applying Lemmas \ref{le2.3} and \ref{le3.5}, we further obtain
\[
\liminf_{t\to \infty}\inf_{|n|< c't}u_n(t)>0.
\]
The proof is complete.
\end{proof}

If (H4) holds, we also have the following convergence conclusion.
\begin{theorem}\label{th2}
Assume that Theorem \ref{th1} holds and (H4) is true. Then
\[
\liminf_{t\to\infty}\inf_{|n|<ct}u_n(t)=\limsup_{t\to\infty}\sup_{|n|<ct}u_n(t)=E
\]
for any given $c< c^*.$
\end{theorem}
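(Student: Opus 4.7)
The plan is to squeeze the limit superior and limit inferior of $u_n$ on the moving cone $|n|<ct$ between two undelayed scalar lattice equations, use Lemmas \ref{le2.1}--\ref{le2.3} to bound these quantities above and below by the positive equilibria of the auxiliary equations, and finally invoke hypothesis (H4) to collapse both bounds to $E$.

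Fix $c\in(0,c^*)$ and set
\[
\overline u(c):=\limsup_{t\to\infty}\sup_{|n|<ct}u_n(t),\qquad \underline u(c):=\liminf_{t\to\infty}\inf_{|n|<ct}u_n(t),
\]
so Lemma \ref{le3.2} and Theorem \ref{th1} yield $0<\underline u(c)\le\overline u(c)\le 1$. Pick $c'\in(c,c^*)$ and $\epsilon>0$ small. By definition, for $t$ sufficiently large and $|n|\le c't$ one has $\underline u(c')-\epsilon\le u_n(t)\le\overline u(c')+\epsilon$; because $|n|\le ct$ forces $|n|\le c'(t-\tau)$ for $t$ large, the same sandwich applies to $u_n(t-\tau)$ throughout $|n|<ct$. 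Strict monotonicity of $g$ in its second argument (H2) then yields, in the moving region $|n|<ct$ for $t$ large,
\begin{align*}
\tfrac{du_n(t)}{dt} &\le [\mathcal D u]_n(t)+u_n(t)\,g\!\bigl(u_n(t),\,\underline u(c')-\epsilon\bigr),\\
\tfrac{du_n(t)}{dt} &\ge [\mathcal D u]_n(t)+u_n(t)\,g\!\bigl(u_n(t),\,\overline u(c')+\epsilon\bigr).
\end{align*}

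Each right-hand side defines an undelayed scalar lattice equation whose nonlinearity $v\mapsto v\,g(v,\alpha)$ satisfies (f1)--(f2): by (H2)--(H3), the map $v\mapsto g(v,\alpha)$ is Lipschitz, strictly decreasing, strictly positive at $0$ (since $g(0,\alpha)\ge g(0,1)>0$), and tends to $-\infty$, so it admits a unique positive zero $M(\alpha)$. Lemma \ref{le2.2} supplies a positive spreading speed $c^*(\alpha)>0$, and Lemma \ref{le2.3} produces spreading to $M(\alpha)$ in any cone strictly inside $c^*(\alpha)$. Choosing $c$ small enough that $c<\min\{c^*(\underline u(c')-\epsilon),\,c^*(\overline u(c')+\epsilon)\}$, inserting a small compactly supported positive subsolution of the lower auxiliary equation inside $|n|<cT'$ at a large time $T'$ where Theorem \ref{th1} guarantees $u_n(T')$ is bounded below by a positive constant, and comparing against the spatially uniform supersolution $\equiv 1$ of the upper auxiliary equation, Lemma \ref{le2.1} propagates the comparisons forward to produce
\[
\underline u(c)\ge M\!\bigl(\overline u(c')+\epsilon\bigr),\qquad \overline u(c)\le M\!\bigl(\underline u(c')-\epsilon\bigr).
\]
Letting $c'\downarrow c$ and $\epsilon\downarrow 0$, continuity of $g$ and the equivalence $u\le M(\alpha)\Leftrightarrow g(u,\alpha)\ge 0$ turn these into $g(\overline u(c),\underline u(c))\ge 0$ and $g(\underline u(c),\overline u(c))\le 0$. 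Since $\underline u(c)>0$ and (H2) forces $g(1,\underline u(c))<g(1,0)=0$, necessarily $\overline u(c)<1$, so hypothesis (H4) is applicable and gives $\underline u(c)=\overline u(c)=E$, as required.

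The main obstacle is the third paragraph: the two differential inequalities hold only on the moving region $|n|<ct$, so Lemma \ref{le2.1} cannot be applied verbatim on all of $\mathbb Z$. The remedy, parallel to how Theorem \ref{th1} is deduced from Lemma \ref{le3.5}, is to exploit positivity of the auxiliary spreading speeds $c^*(\alpha)$ and use small compactly supported initial data at a large time: the auxiliary subsolution then spreads outward into the interior of $|n|<ct$ from within, so the missing inequality at the edge of the cone plays no role in the long-time limit.
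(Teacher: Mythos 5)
Your endgame (deriving $g(\underline u,\overline u)\le 0$ and $g(\overline u,\underline u)\ge 0$, checking $\overline u<1$, and invoking (H4)) is exactly where the paper also lands, but the route you take to those two inequalities has a genuine gap, in fact two related ones. First, the comparison result you lean on, Lemma \ref{le2.1}, requires the differential (or integral) inequality to hold for \emph{all} $n\in\mathbb{Z}$, whereas your two inequalities hold only on the moving region $|n|<ct$; outside it $u_n(t-\tau)$ may be near $0$, so the upper inequality with second argument $\underline u(c')-\epsilon$ simply fails there. Your proposed remedy does not close this: a lattice equation propagates positivity instantaneously, so the compactly supported subsolution you insert at time $T'$ is positive on all of $\mathbb{Z}$ for $t>T'$, and the comparison still needs the inequality for $u$ at every site, including outside the cone. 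Likewise the spatially uniform supersolution $\equiv 1$ relaxes to $M(\underline u(c')-\epsilon)<1$, so it cannot dominate $u$ near the cone edge by any boundary argument either. Asserting that ``the missing inequality at the edge of the cone plays no role in the long-time limit'' is precisely the step that needs proof; the paper avoids this issue for the persistence part by manufacturing a \emph{global} inequality (Lemma \ref{le3.5}, via $Mu_n(t)\ge u_n(t-\tau)$), and for Theorem \ref{th2} it abandons cone-wise comparison altogether: it takes sequences realizing $\underline E$ and $\overline E$, passes to the limit in the variation-of-constants formula \eqref{2.6} by dominated convergence, and reads off $g(\underline E,\overline E)\le 0$, $g(\overline E,\underline E)\ge 0$ directly.

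Second, your step ``choosing $c$ small enough that $c<\min\{c^*(\underline u(c')-\epsilon),c^*(\overline u(c')+\epsilon)\}$'' is incompatible with the statement: $c<c^*$ is \emph{given}. Since $g(0,\alpha)<g(0,0)$ for $\alpha>0$ and the speed in Lemma \ref{le2.2} is strictly increasing in $h(0)$, the auxiliary speeds $c^*(\alpha)$ with $\alpha=\overline u(c')+\epsilon>0$ are strictly below $c^*$, so for $c$ close to $c^*$ the lower auxiliary equation does not spread fast enough to fill $|n|<ct$ and your bound $\underline u(c)\ge M(\overline u(c')+\epsilon)$ is not obtained. Nor does proving the theorem for small cones imply it for larger ones: enlarging the cone can only increase the $\limsup\sup$ and decrease the $\liminf\inf$. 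The fluctuation argument through \eqref{2.6} (or any argument exploiting that Theorem \ref{th1} already gives uniform positivity on \emph{every} cone of speed $c<c^*$) is what lets the paper treat all $c<c^*$ at once; your squeeze by auxiliary undelayed equations, as written, does not.
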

\begin{proof}
Define
\[
\liminf_{t\to\infty}\inf_{|n|<ct}u_n(t)=\underline{E}, \limsup_{t\to\infty}\sup_{|n|<ct}u_n(t)=\overline{E}.
\]
Then what we have done implies that
\[
0< \underline{E}\le \overline{E} \le 1.
\]
Using dominated convergence in \eqref{2.6}, we obtain
\[
\underline{E}\ge \frac{ D(\underline{E}+\underline{E})+d\underline{E}+\underline{E} g(\underline{E}, \overline{E})}{2D+d}
\]
and
\[
\overline{E}\le \frac{ D(\overline{E}+\overline{E})+d\overline{E}+\overline{E} g(\overline{E}, \underline{E})}{2D+d}.
\]
From (H4), the proof is complete.
\end{proof}

\begin{theorem}\label{th6}
Assume that (H1)-(H4) hold. If $u_n(t)$ is defined by \eqref{30} and $\phi_n(0)>0$ for some $n\in \mathbb{Z},$ then
\[
\liminf_{t\to\infty}\inf_{|n|<ct}u_n(t)=\limsup_{t\to\infty}\sup_{|n|<ct}u_n(t)=E
\]
for any given $c< c^*.$
\end{theorem}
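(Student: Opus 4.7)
The proof extends Theorem \ref{th2} to non-compactly-supported initial data. The upper bound $\overline{E} \le 1$ and the identification of $\underline{E} = \overline{E} = E$ via (H4) at the end of Theorem \ref{th2}'s proof never used compact support of $\varphi$; the only step that did was the spreading argument giving $\underline{E} := \liminf_{t\to\infty}\inf_{|n|<ct}u_n(t) > 0$. So the plan is to redo just this lower-bound step, replacing the compact-support hypothesis by positivity together with comparison starting at a positive time $\theta > 0$.

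First, I would verify that $u_n(\theta) > 0$ for every $n \in \mathbb{Z}$ and every $\theta > 0$. By the choice of $d$, the term $du + ug(u,1)$ is nondecreasing and vanishes at $0$, so $d + g(u,v) \ge 0$ on $[0,1]^2$ by the monotonicity of $g$ in its second argument; hence $H_n(s)\ge 0$ in \eqref{2.6}, and $u_{n_0}(t) \ge e^{-(2D+d)t}\varphi_{n_0}(0) > 0$. Iterating \eqref{2.6} one site at a time, the strictly positive coupling $D(u_{n\pm 1})$ inside $H_n$ spreads this positivity to every $n$ in arbitrarily short time. Next, given $c < c^*$, Lemma \ref{le2.2}(3) together with the continuity of $g(0,\epsilon)$ in $\epsilon$ lets me pick $\epsilon > 0$ so small that the undelayed auxiliary equation
\[
\frac{dv_n(t)}{dt} = [\mathcal{D}v]_n(t) + v_n(t)\, g(M v_n(t),\epsilon),
\]
with $M=M(\epsilon)$ supplied by Lemma \ref{le3.5}, has spreading speed $c^*(\epsilon) > c$. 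This auxiliary equation fits (f1)--(f2) with equilibrium $V_\epsilon := u_\epsilon/M$ where $g(u_\epsilon,\epsilon)=0$. Fixing $\theta > 0$ and some site $n_1$, I would define the compactly supported lower initial datum
\[
\tilde{v}_{n_1}(\theta) = \tfrac{1}{2}\min\{u_{n_1}(\theta),V_\epsilon\}, \qquad \tilde{v}_n(\theta)=0 \text{ for } n\neq n_1,
\]
which is positive by the previous step. Let $\tilde v_n(t)$ solve the auxiliary equation with this initial profile at time $\theta$. Lemma \ref{le3.5} says $u_n$ is an upper solution of that equation, so Lemma \ref{le2.1} yields $\tilde{v}_n(t) \le u_n(t)$ for $t \ge \theta$, and Lemma \ref{le2.3} supplies $\liminf_{t\to\infty}\inf_{|n|<ct}\tilde v_n(t) > 0$. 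Thus $\underline{E} > 0$.

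With $\underline{E} > 0$ secured, the dominated-convergence argument of Theorem \ref{th2} applied to \eqref{2.6} produces $g(\underline{E},\overline{E})\le 0$ and $g(\overline{E},\underline{E})\ge 0$. Strict monotonicity of $g$ in its second variable together with $g(1,0)=0$ forces $\overline{E} < 1$, so (H4) closes the argument with $\underline{E}=\overline{E}=E$. The delicate point is the positivity step: the propagation of $\varphi_{n_0}(0)>0$ to all of $\mathbb{Z}$ in arbitrarily short time is what the strong maximum principle would give in the continuous-space setting and must here be extracted by hand from the coupling operator $[\mathcal{D}v]_n$ and the mild-solution identity, which is what lets us dispense with compact support of the initial data altogether.
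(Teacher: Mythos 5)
Your proposal is correct and follows essentially the route the paper intends for this result (its proof is omitted as ``similar to Theorem \ref{th2}''): use Lemma \ref{le3.5} to compare $u_n$ with the delay-free auxiliary equation, apply Lemmas \ref{le2.1} and \ref{le2.3} to get $\underline{E}>0$, and then run the dominated-convergence/(H4) step of Theorem \ref{th2}. The details you add beyond the paper's sketch---propagation of positivity to every lattice site via the mild-solution formula \eqref{2.6} so that a compactly supported sub-datum can be placed below $u_n(\theta)$ at a positive time (which is what Lemma \ref{le2.3} needs once compact support of $\varphi$ is dropped), and the observation that strict monotonicity of $g$ with $g(1,0)=0$ forces $\overline{E}<1$ before invoking (H4)---are precisely the pieces the paper leaves implicit, and they are argued correctly.
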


The proof is similar to that of Theorem \ref{th2}, and we omit it here.
Before ending this section, we make the following remark.
\begin{remark}{\rm
The spreading speed of \eqref{3} with $\tau >0$ is the same as that of \eqref{3} with $\tau =0$, and we obtain the persistence of spreading speed of \eqref{3} with any time delay $\tau > 0$. For the corresponding topic in delayed reaction-diffusion equations, see Lin \cite{lin}.}
\end{remark}

\section{Minimal Wave Speed}
\noindent

In this part, we shall consider the traveling wave solutions of \eqref{3} and first present our main conclusion as follows.
\begin{theorem}\label{th3}
Assume that (H1)-(H3) hold. If $c\ge c^* (c<c^*),$ then  \eqref{3} has (has not) a positive traveling wave solution $\phi (\xi)$ such that
\begin{equation}\label{41}
\lim_{\xi\to -\infty}\phi (\xi)=0, 0< \liminf_{\xi\to \infty}\phi (\xi)\le \limsup_{\xi\to \infty}\phi (\xi)\le 1.
\end{equation}
Moreover, when (H4) and $c\ge c^*$ are true, then \eqref{2.2} remains true.
\end{theorem}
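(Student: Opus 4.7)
The plan is to treat the three parts of the theorem—existence for $c \geq c^*$, nonexistence for $c < c^*$, and the sharpened boundary behavior under (H4)—using Schauder's fixed-point theorem on a weighted space together with the auxiliary undelayed equations developed in Section 2. For $c > c^*$, let $\lambda_1(c) < \lambda_2(c)$ be the positive roots of $\Delta(\lambda, c) = 0$ with $h(0) = g(0, 0)$ (Lemma \ref{le2.2}), pick $\mu \in (0, \lambda_1(c))$, and rewrite \eqref{2.1} by variation of constants (with the same $d$ from Section 3) as the fixed-point equation
\[
T(\phi)(\xi) = \frac{1}{c} \int_{-\infty}^{\xi} e^{-(2D+d)(\xi-s)/c} \bigl[ D(\phi(s+1) + \phi(s-1)) + d\phi(s) + \phi(s) g(\phi(s), \phi(s-c\tau)) \bigr] ds
\]
on $B_\mu(\mathbb{R}, \mathbb{R})$. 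I would work inside the closed convex set $\Gamma = \{\phi \in B_\mu : \underline\phi \le \phi \le \overline\phi\}$, where $\overline\phi(\xi) = \min\{e^{\lambda_1 \xi}, 1\}$ and $\underline\phi(\xi) = \max\{e^{\lambda_1 \xi}(1 - q e^{\eta \xi}), 0\}$, with $\eta \in (0, \min\{\lambda_1, \lambda_2 - \lambda_1\})$ and $q > 1$ large.

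To check $T : \Gamma \to \Gamma$, I would use the $v$-monotonicity of $g$ to bound $g(\phi, \phi(\cdot - c\tau)) \le g(\phi, 0)$ for the upper-solution side and to control the delay term by $\overline\phi(\cdot - c\tau)$ for the lower-solution side; the choice of $d$ makes $du + ug(u,0)$ and $du + ug(u,1)$ monotone in $u \in [0,1]$, so both inequalities reduce, after applying the Lipschitz bound on $g$, to $\Delta(\lambda_1, c) = 0$ plus absorbable $O(e^{(\lambda_1 + \eta)\xi})$ corrections handled by choosing $q$ large and $\eta < \lambda_2 - \lambda_1$. The kernel $e^{-(2D+d)(\xi-s)/c}$ then yields continuity of $T$ in $|\cdot|_\mu$ and a uniform $C^1$ bound on $T(\Gamma)$, giving compactness, so Schauder produces a fixed point $\phi \in \Gamma$, a traveling wave with $\phi(-\infty) = 0$ from $\phi \le \overline\phi$ and $\liminf_{\xi \to \infty} \phi(\xi) > 0$ from a standard linearization-at-$+\infty$ contradiction. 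For $c = c^*$, take $c_k \downarrow c^*$ with waves $\phi_k$ just constructed, translate so that $\phi_k(0) \in (0, \min\{E, 1\}/2)$, extract a locally uniform limit via Arzelà–Ascoli from the $C^1$ bound, and pass to the limit in $T$ by dominated convergence.

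For the nonexistence, assume a wave $\phi$ with speed $c < c^*$ satisfying \eqref{41} exists and set $u_n(t) = \phi(n + ct)$, which solves \eqref{30}. By Lemma \ref{le3.5}, for each $\epsilon \in (0,1)$ there is $M(\epsilon) \ge 1$ such that $u_n$ dominates, by Lemma \ref{le2.1}, the solution $\tilde u_n$ of the undelayed auxiliary equation with nonlinearity $w g(Mw, \epsilon)$ started from any compactly supported $\psi \le \phi(\cdot)$; such nontrivial $\psi$ exists because $\liminf_{\xi \to \infty} \phi(\xi) > 0$. By Lemmas \ref{le2.2}(3) and \ref{le2.3}, the spreading speed of this auxiliary equation is $c^*_\epsilon = \inf_{\lambda > 0}\frac{D(e^\lambda + e^{-\lambda} - 2) + g(0,\epsilon)}{\lambda} \to c^*$ as $\epsilon \to 0^+$. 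Pick $\epsilon$ so small that $c^*_\epsilon > c' := (c + c^*)/2 > c$; then $\liminf_{t \to \infty} \inf_{|n| < c' t} u_n(t) > 0$, but along $n_t = \lfloor -c' t \rfloor$, $n_t + ct \to -\infty$, forcing $\phi(n_t + ct) \to 0$, a contradiction.

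Finally, under (H4) and $c \ge c^*$, let $\phi$ be the wave just constructed; then $0 \le \phi \le 1$, and with $\underline E = \liminf_{\xi \to \infty} \phi(\xi)$, $\overline E = \limsup_{\xi \to \infty} \phi(\xi)$, we have $0 < \underline E \le \overline E \le 1$. Passing to liminf/limsup in the variation-of-constants form of \eqref{2.1} via dominated convergence, exactly as in Theorem \ref{th2}, yields $g(\underline E, \overline E) \le 0 \le g(\overline E, \underline E)$; the case $\overline E = 1$ is excluded by $g(1, \underline E) < g(1, 0) = 0$ (strict $v$-monotonicity from (H2), since $\underline E > 0$) combined with $g(\overline E, \underline E) \ge 0$, so (H4) applies and forces $\underline E = \overline E = E$, proving \eqref{2.2}. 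I expect the main technical obstacle to be the precise verification of the lower-solution inequality for $\underline\phi$ under the non-quasimonotone delay coupling, where the Lipschitz bound on $g$ must balance the characteristic-equation slack through the specific choices of $\eta$, $q$, and $\mu$.
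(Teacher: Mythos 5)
Your overall architecture coincides with the paper's: Schauder's fixed point theorem on the weighted space with exactly the pair $\overline{\phi}(\xi)=\min\{e^{\lambda_1\xi},1\}$, $\underline{\phi}(\xi)=\max\{e^{\lambda_1\xi}-qe^{(\lambda_1+\eta)\xi},0\}$ and the cross-type inequalities $F(\underline{\phi},\overline{\phi})\ge\underline{\phi}$, $F(\overline{\phi},\underline{\phi})\le\overline{\phi}$ for $c>c^*$; a limiting argument along $c_k\downarrow c^*$; a spreading-speed contradiction for $c<c^*$ (your version, which goes through Lemma \ref{le3.5} and the continuity of the speed in Lemma \ref{le2.2}(3) rather than through Theorem \ref{th2}, is actually cleaner because it does not smuggle in (H4)); and the fluctuation/(H4) argument for \eqref{2.2}, where your explicit exclusion of $\overline{E}=1$ is a welcome detail. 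However, there are two places where your write-up has genuine gaps relative to what the theorem asserts.

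First, in the case $c=c^*$ you only say: translate so that $\phi_k(0)\in(0,\min\{E,1\}/2)$, extract a locally uniform limit, and pass to the limit in the fixed-point equation. This produces a bounded positive solution of \eqref{2.1} with $c=c^*$, but it does not establish \eqref{41} for the limit: the normalization $\phi_k(0)=\delta$ alone does not prevent mass from ``leaking in from $-\infty$,'' so $\lim_{\xi\to-\infty}\phi(\xi)=0$ can fail to follow. The paper's proof of Lemma \ref{le4.3} needs two extra ingredients you omit: the first-crossing normalization $\phi_i(0)=\delta$, $\phi_i(\xi)<\delta$ for $\xi<0$ (with $\delta$ tied to $g$, e.g. $g(4\delta,1)>0$), and then, assuming $\limsup_{\xi\to-\infty}\phi(\xi)>0$, an application of the spreading result (Theorem \ref{th6}) giving a time $T$ \emph{uniform in the site} after which the solution exceeds $3\delta$, which contradicts $\phi(\xi)\le\delta$ for $\xi<0$. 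Without some argument of this kind your limit function is not shown to satisfy the boundary condition at $-\infty$, which is the whole point of the critical case. Second, for $c>c^*$ you justify $\liminf_{\xi\to\infty}\phi(\xi)>0$ by ``a standard linearization-at-$+\infty$ contradiction.'' In this non-quasimonotone setting that is not a proof: the lower barrier $\underline{\phi}$ is positive only on a left half-line, positivity of $\phi$ everywhere does not prevent $\liminf_{\xi\to\infty}\phi=0$, and a naive linearization does not rule out oscillatory decay along a subsequence. The paper obtains this (and the refinement \eqref{2.2} under (H4)) by observing that $u_n(t)=\phi(n+ct)$ solves \eqref{30} and invoking the persistence part of the Section 3 spreading analysis (Lemmas \ref{le3.4}--\ref{le3.5}, Theorems \ref{th1}, \ref{th6}); you should do the same, since you already use exactly this machinery for the nonexistence part.
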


We now prove the result by three lemmas.
\begin{lemma}\label{le4.1}
Assume that (H1)-(H3) hold. If $c<c^*,$ then \eqref{2.1} has no positive solutions satisfying \eqref{41}.
\end{lemma}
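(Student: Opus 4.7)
The plan is to argue by contradiction: trap the would-be traveling wave below by a compactly-initialized solution of an \emph{undelayed} auxiliary equation whose spreading speed still exceeds $c$, and then observe that the wave must vanish far to the left. Suppose $\phi$ is a positive solution of \eqref{2.1} with speed $c<c^{*}$ satisfying \eqref{41}, and set $u_{n}(t):=\phi(n+ct)$, so $u_{n}(\cdot)$ solves \eqref{30} with initial profile $\varphi_{n}(s)=\phi(n+cs)$. A preliminary maximum-principle check on \eqref{2.1} using (H1)--(H2) (at any putative point where $\phi>1$, the discrete Laplacian contribution is nonpositive while $g(\phi,\cdot)<g(1,\cdot)\le g(1,0)=0$) shows $0\le \phi\le 1$, placing $u_{n}(\cdot)$ in the admissible class of Section 2.

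Next I would fix $c''\in(c,c^{*})$ and use Lemma \ref{le2.2}(3) together with continuity of $g$ to select $\epsilon\in(0,1)$ small enough that
\[
c_{2}(\epsilon):=\inf_{\lambda>0}\frac{D(e^{\lambda}+e^{-\lambda}-2)+g(0,\epsilon)}{\lambda}>c''.
\]
Applying Lemma \ref{le3.5} with this $\epsilon$ furnishes $M=M(\epsilon)\ge 1$ realizing $u_{n}(t)$ as a mild supersolution of the undelayed auxiliary equation
\[
\frac{dw_{n}(t)}{dt}=[\mathcal{D}w]_{n}(t)+w_{n}(t)\,g(Mw_{n}(t),\epsilon),
\]
whose nonlinearity $f(w):=wg(Mw,\epsilon)$ satisfies (f1)--(f2): $g(0,\epsilon)>0$ for small $\epsilon$ because $g(0,0)>0$ and $g$ is continuous, and $g(u,\epsilon)\to-\infty$ as $u\to\infty$ follows from (H2); this yields a unique positive root $M^{*}$ of $w\mapsto g(Mw,\epsilon)$.

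Third, I would choose a compactly supported initial datum $\psi_{n}$ with $0\le\psi_{n}\le \min(\phi(n),M^{*})$ and $\psi_{n_{0}}>0$ for some $n_{0}$ (possible because $\phi>0$ everywhere), and let $w_{n}(t)$ solve the auxiliary equation with datum $\psi_{n}$. The comparison assertion of Lemma \ref{le2.1} then delivers $w_{n}(t)\le u_{n}(t)=\phi(n+ct)$ for every $t\ge 0$, $n\in\mathbb{Z}$. By Lemma \ref{le2.3}, $c_{2}(\epsilon)$ is the spreading speed of $w_{n}$, so
\[
\liminf_{t\to\infty}\inf_{|n|<c''t}\phi(n+ct)\ge\liminf_{t\to\infty}\inf_{|n|<c''t}w_{n}(t)>0.
\]
Taking $n_{t}=-\lfloor c''t\rfloor$ gives $|n_{t}|\le c''t$ (strict for generic $t$) and $n_{t}+ct=(c-c'')t+O(1)\to-\infty$, forcing $\phi(n_{t}+ct)\to\phi(-\infty)=0$ and contradicting the positive lower bound.

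The main obstacle is the absence of a classical comparison principle for the delayed, non-quasi-monotone equation \eqref{2.1}: one cannot directly compare $u_{n}(t)$ against a compactly-initialized solution of \eqref{30}. The entire argument pivots on Lemma \ref{le3.5}, which recasts $u_{n}$ as an upper mild solution of an undelayed auxiliary equation that \emph{does} admit the classical comparison of Lemma \ref{le2.1}, and on Lemma \ref{le2.2}(3), which guarantees that the auxiliary spreading speed $c_{2}(\epsilon)$ can be pushed above any prescribed $c''<c^{*}$ by shrinking $\epsilon$.
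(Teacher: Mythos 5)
Your argument is correct and follows essentially the same route as the paper: regard the wave profile as a solution of the initial value problem \eqref{30}, obtain a positive lower bound for it along leftward rays of speed strictly between $c$ and $c^*$ via the spreading-speed machinery of Sections 2--3, and contradict $\lim_{\xi\to-\infty}\phi(\xi)=0$. The only difference is that you re-run Lemmas \ref{le2.1}--\ref{le2.3} and \ref{le3.5} with a compactly supported comparison datum instead of citing Theorem \ref{th2} directly, which in fact makes your version slightly more careful, since Theorem \ref{th2} formally assumes (H4) and compactly supported initial data, neither of which the traveling wave provides.
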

\begin{proof}
Were the statement false, then there exists some $c''<c^*$ such that \eqref{2.1} with $c=c''$ has a positive solution satisfying \eqref{41}. Namely, $u(x,t)=\phi(x+c''t)$ also satisfies
\begin{equation*}\label{42}
\begin{cases}
\frac{du_n(t)}{dt}= [\mathcal{D}u]_n(x)
+u_n(t)g(u_n(t),u_n(t-\tau)),n\in \mathbb{Z}, t>0,  \\
u_n(s)=\phi (n+c''s), n\in \mathbb{Z}, s\in [-\tau, 0].
\end{cases}
\end{equation*}

Consider $-2n=(c+c'')t,$ then Theorem \ref{th2} implies that
\[
\liminf_{t\to\infty}\inf_{-2n=(c+c'')t}u_n(t) >0,
\]
which contradicts \eqref{41} because of
$
\xi= n+c''t\to -\infty, t\to\infty.
$
The proof is complete.
\end{proof}

\begin{lemma}\label{le4.2}
Assume that (H1)-(H3) hold. Then for each fixed $c> c^* ,$  \eqref{3} with \eqref{41} has a positive  solution $\phi (\xi)$.
\end{lemma}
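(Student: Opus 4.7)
For fixed $c>c^*$, let $0<\lambda_1<\lambda_2$ denote the two roots of $\Delta(\lambda,c)=0$ from Lemma \ref{le2.2}, and choose $d>0$ large enough so that $u\mapsto du+ug(u,w)$ is non-decreasing on $[0,1]$ for every fixed $w\in[0,1]$ (possible by (H2) and boundedness of $g$ on $[0,1]^{2}$). Applying the integrating factor $e^{(2D+d)\xi/c}$ to \eqref{2.1} converts the wave equation into the fixed-point equation $\phi=F(\phi)$, where
$$F(\phi)(\xi)=\frac{1}{c}\int_{-\infty}^{\xi}e^{-(2D+d)(\xi-s)/c}H(\phi)(s)\,ds,$$
$$H(\phi)(s)=D(\phi(s+1)+\phi(s-1))+d\phi(s)+\phi(s)g(\phi(s),\phi(s-c\tau)).$$
The plan is to apply Schauder's fixed-point theorem to $F$ on a set built from matched upper and lower solutions.

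I take $\bar\phi(\xi)=\min\{1,e^{\lambda_1\xi}\}$ and $\underline\phi(\xi)=\max\{0,e^{\lambda_1\xi}-Me^{(\lambda_1+\eta)\xi}\}$, with $\eta\in(0,\min\{\lambda_1,\lambda_2-\lambda_1\})$ (so that $\Delta(\lambda_1+\eta,c)<0$ and $\lambda_1+\eta<2\lambda_1$) and $M\ge1$ to be chosen large. For $\bar\phi$, on $\{e^{\lambda_1\xi}<1\}$ the identity $\Delta(\lambda_1,c)=0$ together with $g(u,v)\le g(0,0)$ from (H2) gives the upper-solution inequality, and on $\{e^{\lambda_1\xi}\ge1\}$ it reduces to $g(1,0)=0$ from (H1). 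For $\underline\phi$ the crucial observation is that any candidate $\phi$ in the working set $S:=\{\phi\in B_\mu(\mathbb{R},\mathbb{R}):\underline\phi\le\phi\le\bar\phi\}$ (with $\mu\in(0,\lambda_1)$) automatically satisfies $\phi(s-c\tau)\le\bar\phi(s-c\tau)$, so $g(\phi,\phi(\cdot-c\tau))\ge g(\phi,\bar\phi(\cdot-c\tau))$ by (H2); plugging $\underline\phi$ into the lower-solution inequality with the effective nonlinearity $g(\underline\phi,\bar\phi(\cdot-c\tau))$, the $e^{\lambda_1\xi}$ order cancels by $\Delta(\lambda_1,c)=0$, the $e^{(\lambda_1+\eta)\xi}$ order has the correct sign because $\Delta(\lambda_1+\eta,c)<0$, and the Lipschitz remainder from $g$ is $O(e^{2\lambda_1\xi})$, which is absorbed by $M|\Delta(\lambda_1+\eta,c)|e^{(\lambda_1+\eta)\xi}$ on $\{\underline\phi>0\}=\{\xi<-(\ln M)/\eta\}$ for $M$ sufficiently large, using $2\lambda_1>\lambda_1+\eta$.

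On $S$ the sandwich $g(\phi,0)\ge g(\phi,\phi(\cdot-c\tau))\ge g(\phi,\bar\phi(\cdot-c\tau))$ combined with the monotonicity of $u\mapsto du+ug(u,w)$ gives $F(S)\subset S$; continuity of $F$ on $B_\mu$ is routine Lipschitz bookkeeping; and compactness follows since $cF(\phi)'=H(\phi)-(2D+d)F(\phi)$ is uniformly bounded in $\phi\in S$ and $\xi\in\mathbb{R}$ (hence $F(S)$ is equicontinuous), while $F(\phi)\le\bar\phi$ forces uniform decay at $-\infty$. Schauder's theorem then produces a fixed point $\phi\in S\cap C^{1}$ solving \eqref{2.1}. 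From $\phi\le\bar\phi$ we read off $\lim_{\xi\to-\infty}\phi(\xi)=0$, and $\phi\le1$ gives $\limsup_{\xi\to\infty}\phi(\xi)\le1$. The delicate inequality $\liminf_{\xi\to\infty}\phi(\xi)>0$ does not come directly from $\underline\phi$ (which vanishes for $\xi\ge-(\ln M)/\eta$) and is the main remaining obstacle. I would resolve it dynamically: set $u_n(t)=\phi(n+ct)$, a solution of \eqref{30} for which $u_n(0)=\phi(n)\ge\underline\phi(n)>0$ on a left half-line; truncate this initial datum to a compactly supported nontrivial minorant; apply Lemma \ref{le3.5} together with Lemma \ref{le2.3} to a delay-free auxiliary equation with $\epsilon>0$ small to conclude $\liminf_{t\to\infty}\inf_{|n|<c't}u_n(t)>0$ for every $c'<c^*$; fixing any $n$ and letting $t\to\infty$ with $\xi=n+ct\to\infty$ then yields $\liminf_{\xi\to\infty}\phi(\xi)>0$.
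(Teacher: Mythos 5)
Your proposal is correct and follows essentially the same route as the paper: the same integrating-factor operator, the same upper function $\min\{1,e^{\lambda_1\xi}\}$ and lower function of the form $e^{\lambda_1\xi}-qe^{\eta'\xi}$ with $\lambda_1<\eta'<\min\{2\lambda_1,\lambda_2\}$, Schauder's fixed point theorem on the resulting convex set with the weighted norm $|\cdot|_\mu$, and the cross-monotonicity (evaluating the delayed slot at $\overline\phi$, resp.\ $0$ or $\underline\phi$) to get invariance. Your treatment of $\liminf_{\xi\to\infty}\phi(\xi)>0$ via viewing $u_n(t)=\phi(n+ct)$ as a solution of \eqref{30} and invoking the spreading results of Section 3 is exactly the paper's closing step, with somewhat more detail supplied.
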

\begin{proof}
If $\phi (\xi),\psi(\xi) \in C_{[0,1]},$ define an operator $F$ as follows
\[
F(\phi,\psi)(\xi)=\frac{1}{c}\int_{-\infty}^{\xi} e^{-\frac{(2D+d)}{c}(\xi-s)} H(\phi, \psi)(s)ds
\]
with
\[
H(\phi,\psi)(s)=d\phi(s)+D(\phi(s+c)+\phi(s-c))+\phi(s)g(\phi(s),\psi(s-c\tau)).
\]
When $\phi (\xi)=\psi(\xi),$ we also denote
\[
F(\phi,\phi)(\xi)=: P(\phi)(\xi).
\]
Then it is easy to prove that
$
P: C_{[0,1]} \to C_{[0,1]}.
$
In fact, because of
\[
0\le H(\phi,\psi)(s)\le d+2D,s\in \mathbb{R},
\]
then
\[
0\le P(\phi)(\xi)\le 1,\xi\in\mathbb{R},
\]
and the uniform continuity of $P(\phi)(\xi)$ is clear by the boundedness of $H(\phi,\psi)(s)$.

We now define two continuous functions as follows:
\[
\overline{\phi}(\xi)=\min\{e^{\lambda_1(c)\xi},1\}, \,\,
\underline{\phi}(\xi)=\max\{e^{\lambda_1(c)\xi}-qe^{\eta\lambda_1(c)\xi},0\}
\]
with $1< \eta <\min\{ 2, \lambda_2(c)/\lambda_1(c)\}$ and $q>1.$ Then
\begin{equation}\label{in}
\underline{\phi}(\xi)\le F(\underline{\phi}, \overline{\phi})(\xi)\le P(\phi)(\xi)\le F(\overline{\phi}, \underline{\phi})(\xi)\le \overline{\phi}(\xi),\xi \in\mathbb{R}
\end{equation}
if  $q>1$ is large enough and
\[
\phi(\xi)\in C_{[0,1]}, \underline{\phi}(\xi)\le \phi (\xi)\le \overline{\phi}(\xi).
\]

Let $4\mu< d/c$ be a constant and $\Gamma$ be
\[
\Gamma=\{\phi: \phi(\xi)\in C_{[0,1]}, \underline{\phi}(\xi)\le \phi (\xi)\le \overline{\phi}(\xi)\}.
\]
Then $\Gamma$ is convex and nonempty, and is bounded and closed in the sense of $|\cdot|_{\mu}.$ From \eqref{in}, we also obtain $P: \Gamma\to \Gamma.$
Moreover, the mapping is complete continuous in the sense of decay norm $|\cdot |_{\mu}.$
For the complete continuous of $P$, we can refer to Huang et al. \cite[Lemmas 3.3 and 3.5]{hlz} and Ma et al. \cite[Theorem 3.1]{mwz} since the proof is independent of the monotonicity.

Using Schauder's fixed point theorem, there is $\phi(\xi)\in \Gamma$ satisfying
\[
P(\phi)(\xi)=\phi(\xi), \underline{\phi}(\xi)\le \phi(\xi)\le \overline{\phi}(\xi),\xi\in\mathbb{R},
\]
which is also a solution of \eqref{3}.

Since $\phi(\xi)$ is a special positive  solution to \eqref{30}, then the asymptotic boundary condition is clear by what we have done in Section 3. The proof is complete.
\end{proof}
\begin{lemma}\label{le4.3}
Assume that (H1)-(H3) hold. If $c= c^* $ holds,  then  \eqref{3} has a positive  solution $\phi (\xi)$ satisfying \eqref{41}.
\end{lemma}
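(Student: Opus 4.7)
My plan is to obtain the critical-speed wave as a limit of the supercritical waves constructed in Lemma \ref{le4.2}. A direct application of Schauder's fixed point theorem at $c = c^*$ fails: the characteristic equation $\Delta(\lambda, c^*) = 0$ has a double root $\lambda^* = \lambda_1(c^*) = \lambda_2(c^*)$, so the lower barrier $\underline{\phi}(\xi) = \max\{e^{\lambda_1 \xi} - q e^{\eta \lambda_1 \xi}, 0\}$ of Lemma \ref{le4.2} degenerates (the exponent ratio $\eta \in (1, \lambda_2(c)/\lambda_1(c))$ is squeezed to $1$).

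Choose a sequence $c_k \downarrow c^*$ and, for each $k$, obtain a profile $\phi_k \in C_{[0,1]}$ from Lemma \ref{le4.2} solving \eqref{2.1} with $c = c_k$ together with \eqref{41}. Fix a sufficiently small constant $\epsilon_0 > 0$ and translate each profile by setting $s_k = \inf\{\xi : \phi_k(\xi) = \epsilon_0\}$ and $\widetilde{\phi}_k(\xi) = \phi_k(\xi + s_k)$, so that $\widetilde{\phi}_k(0) = \epsilon_0$ and $\widetilde{\phi}_k(\xi) < \epsilon_0$ for $\xi < 0$. Since $\widetilde{\phi}_k$ takes values in $[0,1]$ and $g$ is Lipschitz, the profile equation yields a uniform bound on $\widetilde{\phi}_k'$, whence $\{\widetilde{\phi}_k\}$ is equicontinuous. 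By Arzel\`a--Ascoli and a diagonal argument, a subsequence converges uniformly on compact subsets of $\mathbb{R}$ to a continuous limit $\phi^* \in C_{[0,1]}$ with $\phi^*(0) = \epsilon_0 > 0$. Passing to the limit in the integral identity $\widetilde{\phi}_k = F(\widetilde{\phi}_k, \widetilde{\phi}_k)$ by dominated convergence (the kernel $e^{-(2D+d)(\xi - s)/c_k}$ admits a uniform integrable majorant since $c_k$ is bounded away from $0$) shows that $\phi^*$ is a fixed point of $P$ at speed $c^*$, hence a solution of \eqref{2.1} with $c = c^*$.

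To verify \eqref{41}: for the right end, $\phi^*(0) = \epsilon_0 > 0$ together with the spreading argument of Section 3 (viewing $u_n(t) := \phi^*(n + c^* t)$ as a solution of \eqref{3} with nontrivial positive history) precludes $\liminf_{\xi \to \infty} \phi^*(\xi) = 0$, since otherwise the lower auxiliary estimate of Lemma \ref{le3.5} would be violated. For the left end, pass to the limit in the supercritical upper bound $\widetilde{\phi}_k(\xi) \le e^{\lambda_1(c_k)(\xi + s_k)}$ to extract $\phi^*(\xi) \le C e^{\lambda^* \xi}$ on $(-\infty, 0]$, which gives $\phi^*(-\infty) = 0$.

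The main obstacle is establishing a uniform upper bound on the shifts $s_k$: if $s_k \to +\infty$, the prefactor $e^{\lambda_1(c_k) s_k}$ blows up and the left-end decay estimate is lost; this is delicate precisely because the lower barrier of Lemma \ref{le4.2} weakens as $c_k \downarrow c^*$. To pin $s_k$ down, I would apply Lemma \ref{le3.5} to $\phi_k$ to reduce the question to a delay-free comparison equation whose spreading speed is, by Lemma \ref{le2.2}(3), a continuous strictly increasing function of $g(0, \epsilon)$ tending to $c^*$ as $\epsilon \to 0^+$. For $\epsilon > 0$ small but fixed, the auxiliary speed stays strictly below $c_k$ for all large $k$, and this speed mismatch forces $\phi_k$ to attain the level $\epsilon_0$ at a location bounded uniformly from above, completing the remaining step.
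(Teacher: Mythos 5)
Your construction of the critical wave matches the paper's: take $c_k\downarrow c^*$, use the supercritical profiles from Lemma \ref{le4.2}, normalize by translating so that the profile first reaches a small fixed level at $\xi=0$, get equicontinuity from the uniform derivative bound, extract a locally uniform limit by Arzel\`a--Ascoli, and pass to the limit in the integral fixed-point identity $\phi_k=P(\phi_k)$ (the kernels converge uniformly since $c_k\to c^*>0$); the right-hand behavior then follows from the spreading results of Section 3. Up to this point you are on the paper's track.

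The genuine gap is in your treatment of $\lim_{\xi\to-\infty}\phi^*(\xi)=0$. You route it through the supercritical upper barrier $\phi_k(\xi)\le e^{\lambda_1(c_k)\xi}$, which after translation requires a uniform upper bound on the shifts $s_k$; you correctly identify this as the main obstacle, but the proposed fix is only a heuristic and does not obviously close it. The speed-mismatch idea via Lemma \ref{le3.5} and Lemma \ref{le2.2}(3) gives a delay-free comparison equation spreading at some speed $c_\epsilon<c_k$, but the time needed before that spreading lower bound lifts $\phi_k$ up to the level $\epsilon_0$ depends on how much initial ``mass'' $\phi_k$ carries far to the left, and this degenerates as $c_k\downarrow c^*$: the lower barrier $e^{\lambda_1\xi}-q_ke^{\eta_k\lambda_1\xi}$ has $\eta_k\to1$, $q_k\to\infty$, so its amplitude tends to $0$ and its support recedes to $-\infty$, and the resulting waiting times are not uniform in $k$. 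Hence no uniform bound on $s_k$, and no limiting estimate $\phi^*(\xi)\le Ce^{\lambda^*\xi}$, follows from what you wrote. The decisive point is that no such exponential estimate is needed: \eqref{41} only asks $\phi^*(-\infty)=0$, and the paper proves this directly on the limit function by a dynamical contradiction. Choose the normalization level $\delta$ small (so that $g(4\delta,1)>0$), so the limit satisfies $\phi^*(0)=\delta$ and $\phi^*(\xi)\le\delta$ for $\xi<0$; if $\limsup_{\xi\to-\infty}\phi^*(\xi)>0$, pick $\xi_i\to-\infty$ with $\phi^*(\xi_i)>\varepsilon_0$ and view $u_n(t)=\phi^*(n+c^*t)$ as a solution of \eqref{30}; the convergence/spreading result (Theorem \ref{th6}) gives a time $T$, independent of $i$, with $\phi^*(\xi_i+T)>3\delta$, while $\xi_i+T<0$ for large $i$ forces $\phi^*(\xi_i+T)\le\delta$, a contradiction. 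Replacing your left-tail step by this argument (or supplying a genuine uniform bound on $s_k$, which you have not done) is what is missing.
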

\begin{proof}
We now prove the result by passing to a limit function \cite{linruan}. Let $c_i\to c^*, i\in \mathbb{N},$ be strictly decreasing, then for each fixed $c_i,$ $P$ with $c=c_i$ has a positive fixed point $\phi_i(\xi)$ such that
\[
0<\phi_i(\xi)<1, \liminf_{\xi\to\infty}\phi_i(\xi)>0, \lim_{\xi\to -\infty}\phi_i(\xi)=0, i\in \mathbb{N}.
\]
Without loss of generality, we assume that
\[
\phi_i(0)= \delta, \phi_i(\xi) < \delta, \xi< 0
\]
with $g(4\delta, 1)>0.$ Due to the uniform boundedness of $\phi'_i(\xi),\xi\in \mathbb{R},$  $\phi_i(\xi)$ are equicontinuous. Using Ascoli-Arzela lemma, $\phi_i(\xi)$ has a subsequence, still denoted by $\phi_i(\xi),$ and there exists $\phi(\xi)\in C_{[0,1]} $ such that
\[
\phi_i(\xi) \to \phi(\xi), i\to\infty,
\]
in which the limit is pointwise and locally uniform on any bounded interval of $\xi\in\mathbb{R}.$ Clearly, we also have
\[
\phi(0)=\delta, \phi(\xi) \le \delta, \xi< 0.
\]

Note that
\[
e^{-\frac{(2D+d)}{c_i}(\xi -s)}\to e^{-\frac{(2D+d)}{c^*}(\xi -s)}, i\to\infty,
\]
and the convergence is uniform for $\xi\in\mathbb{R}, s\le \xi.$ Therefore, $\phi(\xi)$ is a fixed point of $P$
with $c=c^*.$ By the properties of $P$,  $\phi(\xi)$ is a positive solution to \eqref{3}.

Due to the conclusions in Section 3, the limit behavior of $\xi\to\infty$ is clear. We now consider the limit behavior when $\xi\to -\infty.$ If $\limsup_{\xi\to -\infty}\phi (\xi) >0,$ then there exists $\varepsilon_0>0$ such that there exists $- \xi_i <-i$ such that
\[
\phi (\xi_i) > \varepsilon_0, i \in \mathbb{N}.
\]

Since $\phi(\xi)$ is a special positive solution to \eqref{30},  then Theorem \ref{th6} implies that there exists $T$ independent of $i$ such that
\[
\phi (\xi_i+ T) > 3\delta,
\]
and a contradiction occurs when $i\to \infty.$ The proof is complete.
\end{proof}

To illustrate our main results, we consider the following example.
\begin{example}Assume that $r>0, a\in [0,1).$
Let
\[
c_*= \inf_{\lambda >0}\frac{D(e^{\lambda}+e^{-\lambda}-2)+ r}{\lambda}.
\]
Then $c_*$ is the minimal wave speed of traveling wave solutions connecting $0$ with $\frac{1}{1+a}$ of
\begin{equation}
\frac{du_n(t)}{dt}= [\mathcal{D}u]_n(x)
+r u_n(t) [1- u_n(t)-a u_n(t-\tau)],n\in \mathbb{Z}, t>0. \label{300}
\end{equation}
Moreover, $c_*$ is the spreading speed of the corresponding initial value problem of \eqref{300} if   $u_n(s)\ge 0, n\in\mathbb{N}, s\in [-\tau, 0]$ satisfies
\begin{description}
  \item[(I1)] for each $n\in\mathbb{N},$ $u_n(s)$ is continuous in $s\in [-\tau, 0];$
  \item[(I2)] $u_n(s)=0, |n|>M, s\in [-\tau, 0]$ with some $M >0;$
  \item[(I3)] $u_n(0)>0$ for some $n\in \mathbb{Z}.$
\end{description}
\end{example}

Before ending this paper, we make the following remark.
\begin{remark}{\rm
Although the delayed term reflect the intraspecific competition in population dynamics, the delay may be harmless to the propagation if the instantaneous competition dominates the delayed one (see (H3)).}
\end{remark}

\end{document}